\newcommand{\F}{\mathcal{F}}
\newcommand{\I}{\mathcal {I}}
\newcommand{\cA}{\mathcal A}
\newcommand{\e}{\varepsilon}
\newcommand{\ol}{\overline}
\newcommand{\N}{\mathbb{N}}
\newcommand{\K}{\mathcal{K}}
\newcounter{cnt1}
\newcounter{cnt2}
\newcommand{\blr}{\begin{list}{$(\roman{cnt1})$}
 {\usecounter{cnt1} \setlength{\topsep}{0pt}
 \setlength{\itemsep}{0pt}}}
\newcommand{\bla}{\begin{list}{$($\alph{cnt2}$)$}
 {\usecounter{cnt2} \setlength{\topsep}{0pt}
 \setlength{\itemsep}{0pt}}}
\newcommand{\el}{\end{list}}
\newtheorem{thm}{Theorem}
\newtheorem{lem}[thm]{Lemma}
\newtheorem{cor}[thm]{Corollary}
\newtheorem{ex}[thm]{Example}
\newtheorem{Def}[thm]{Definition}
\newtheorem{prop}[thm]{Proposition}
\newtheorem{rem}[thm]{Remark}
\newcommand{\Rem}{\begin{rem} \rm}
\newcommand{\bdfn}{\begin{Def} \rm}
\newcommand{\edfn}{\end{Def}}
\begin{document}
\large
\title[Geometry]{Some Geometric Aspects Related to Lim's Condition}
\author[Gothwal]{Deepak Gothwal}
\address[Deepak Gothwal]
{Department of Mathematics\\
Indian Institute of Technology, Kharagpur\\
West Bengal-721302\\ India,
\textit{E-mail~:}
\textit{deepakgothwal190496@gmail.com}}
\author[Rao]{T. S. S. R. K. Rao}
\address[T. S. S. R. K. Rao]
{Department of Mathematics\\
Shiv Nadar Institution of Eminence \\
Gautam Buddha Nagar\\ UP-201314 \\ India,
\textit{E-mail~:}
\textit{srin@fulbrightmail.org}}
\subjclass[2000]{Primary 46 B20, 46B22, 46B25, 46B50, 46E15, 46L05, 46L10, 47H10, 47L05, 47L15}
\keywords{Lim's condition, property ($\ddag$), $w^*$-normal structure, fixed points of non-expansive mappings,  $k$-smoothness,  $C^*$-algebras, $L^1$-predual space, uniform algebras.}
\begin{abstract}
 In  their seminal work, Lau and Mah (\cite{LM}) study $w^*$-normal structure in the space of operators $\mathcal{L}(H)$, on a Hilbert space $H$, using a geometric property of the dual unit ball called Lim's condition. In this paper, we study a weaker form of Lim's condition, which we call property ($\ddagger$), for $C^\ast$-algebras, uniform algebras, and $L^1$-predual spaces. In the case of a $C^\ast$-algebra, we prove that property $(\ddagger)$ is equivalent to Lim's condition and consequently, we obtain a geometric characterization of $C^*$-algebras which are $c_0$-direct sum of finite-dimensional operator spaces. For a uniform algebra, we extend a result of Lau and Mah to show that property $(\ddagger)$ implies that the space is finite-dimensional. In the case of an $L^1$-predual space, we show that this condition implies $k$-smoothness of the norm in the sense considered in \cite{LR}.
\end{abstract}

\maketitle
\section{Introduction} 
Throughout this paper, $X$ denotes a complex Banach space and $X^*$ denotes the dual of $X$. $B(X)$ and $S(X)$ are the notations used for the unit ball and the unit sphere of $X$, respectively. Geometric features of the dual unit ball of a space have strong structural consequences on the space. An interesting property in this context, which arises out of the study of $w^*$-normal structures and fixed point property, is Lim's condition. (See \cite{Lim} for the undefined notions in this context.)
\begin{Def}
  $X$ is said to satisfy Lim's condition if for every bounded net $\{\phi_{\alpha}\}$ in $X^*$ with $\|\phi_{\alpha}\|=s$, for some $s \geq 0$ and $\phi_{\alpha} \xrightarrow{w^*} 0$, we have that for any $\phi \in X^*$,
\[
\lim_{\alpha}\|\phi_{\alpha}+\phi\|=s + \|\phi\|.
\]  
\end{Def}
Note that Lau and Mah referred to this as Lim's condition on $X^*$ while we call it Lim's condition on $X$.

This property originates in the work of Lim in \cite[Theorem 2]{Lim} and is later studied by Lau and Mah in \cite{LM} as a sufficient condition for the $w^*$-normal structure and the existence of fixed points for non-expansive maps on $w^*$-compact convex sets in operator spaces. We now restate \cite[Lemma 4]{LM} as a theorem below where the authors describe weaker versions of Lim's condition.
\begin{thm}
If $X$ has Lim's condition, then
\bla
\item weak* and norm topologies coincide on the dual unit sphere $S(X^*)$ and

\item for any $0 <\e <2$, whenever $\{\phi_{\alpha}\}$ is a net in $B(X^*)$ with $\|\phi_{\alpha}-\phi_{\beta}\| \geq \e$ for all $\alpha \neq \beta$ and $\phi \in B(X^*)$ such that $\phi_{\alpha} \xrightarrow{w^\ast} \phi$, then
\[
\|\phi\| \leq 1 - \e/2.
\]
\el
\end{thm}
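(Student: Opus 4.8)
The plan is to first isolate a ``limit form'' of Lim's condition that dispenses with the rigid constant-norm hypothesis, and then to feed suitably rescaled difference nets into it for both parts (a) and (b).

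First I would prove the following reformulation: if $X$ has Lim's condition, then for \emph{every} bounded net $\{\psi_\alpha\}$ in $X^*$ with $\psi_\alpha\xrightarrow{w^*}0$ and $\|\psi_\alpha\|\to s$ (the norms merely converging, not constant), one still has $\lim_\alpha\|\psi_\alpha+\phi\|=s+\|\phi\|$ for every $\phi\in X^*$. When $s=0$ this is immediate, since then $\psi_\alpha\to 0$ in norm. When $s>0$ the norms are eventually positive, so one may pass to a tail and set $c_\alpha=s/\|\psi_\alpha\|$ and $\tilde\psi_\alpha=c_\alpha\psi_\alpha$; then $\|\tilde\psi_\alpha\|=s$ for all (remaining) $\alpha$ and, as $c_\alpha\to 1$ is bounded, $\tilde\psi_\alpha\xrightarrow{w^*}0$, so Lim's condition applies verbatim to $\{\tilde\psi_\alpha\}$ and yields $\|\tilde\psi_\alpha+\phi\|\to s+\|\phi\|$; finally $\bigl|\,\|\psi_\alpha+\phi\|-\|\tilde\psi_\alpha+\phi\|\,\bigr|\le\|\psi_\alpha-\tilde\psi_\alpha\|=|1-c_\alpha|\,\|\psi_\alpha\|\to 0$ transfers the conclusion back to $\{\psi_\alpha\}$.

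For part (a), let $\{\phi_\alpha\}\subseteq S(X^*)$ with $\phi_\alpha\xrightarrow{w^*}\phi\in S(X^*)$; since the norm topology is the finer one, it suffices to show $\|\phi_\alpha-\phi\|\to 0$. If this fails, there are $\delta>0$ and a cofinal subnet with $\|\phi_\alpha-\phi\|\ge\delta$, and then, by compactness of $[\delta,2]$, a further subnet with $\|\phi_\alpha-\phi\|\to s$ for some $s\ge\delta>0$. Applying the reformulation to $\psi_\alpha=\phi_\alpha-\phi\xrightarrow{w^*}0$ with the functional $\phi$ gives $\|\phi_\alpha\|=\|\psi_\alpha+\phi\|\to s+\|\phi\|=s+1$, contradicting $\|\phi_\alpha\|=1$; hence $s$ cannot be positive. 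For part (b), let $\{\phi_\alpha\}\subseteq B(X^*)$ be $\varepsilon$-separated and $\phi_\alpha\xrightarrow{w^*}\phi\in B(X^*)$. For distinct $\alpha,\beta$ the triangle inequality gives $\|\phi_\alpha-\phi\|+\|\phi_\beta-\phi\|\ge\|\phi_\alpha-\phi_\beta\|\ge\varepsilon$, so at most one index satisfies $\|\phi_\alpha-\phi\|<\varepsilon/2$; discarding it and passing to the cofinal complement we may assume $\|\phi_\alpha-\phi\|\ge\varepsilon/2$ for all $\alpha$, and then, exactly as in (a), pass to a subnet with $\|\phi_\alpha-\phi\|\to r$ for some $r\in[\varepsilon/2,2]$. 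The reformulation applied to $\psi_\alpha=\phi_\alpha-\phi$ with the functional $\phi$ gives $\|\phi_\alpha\|=\|\psi_\alpha+\phi\|\to r+\|\phi\|$, and since $\|\phi_\alpha\|\le 1$ we conclude $r+\|\phi\|\le 1$, i.e. $\|\phi\|\le 1-r\le 1-\varepsilon/2$.

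The subnet extractions and the norm estimates above are routine; the one genuinely load-bearing point is the rescaling in the first step, which is precisely what lets the constant-norm hypothesis in the definition of Lim's condition be applied to difference nets whose norms are only asymptotically constant. I expect the only real care needed is the bookkeeping with cofinal subnets (together with the harmless tacit assumption, standard in this circle of ideas, that the index set in (b) is not reduced to a single point), all of which is handled by the usual device of replacing the net by a cofinal subnet at each stage.
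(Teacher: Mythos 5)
The paper itself gives no proof of this theorem: it is explicitly a restatement of \cite[Lemma 4]{LM}, imported as known background. So there is nothing internal to compare against; judged on its own, your argument is correct and is essentially the standard way to derive these two consequences. The load-bearing step you identify --- upgrading Lim's condition from constant-norm nets to nets with merely convergent norms by rescaling ($\tilde\psi_\alpha = (s/\|\psi_\alpha\|)\psi_\alpha$, then transferring back via $\|\psi_\alpha-\tilde\psi_\alpha\|\to 0$) --- is exactly what is needed to apply the definition to the difference nets $\phi_\alpha-\phi$, and your subnet bookkeeping (extracting $\|\phi_\alpha-\phi\|\to s$ by compactness, discarding the at most one index with $\|\phi_\alpha-\phi\|<\varepsilon/2$) is sound. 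One caveat worth making explicit: as literally stated, part (b) fails for degenerate nets whose index set has a greatest element (e.g.\ a two-point net $\phi_1,\phi_2\in S(X^*)$ with $\|\phi_1-\phi_2\|=2$ ``converges'' to $\phi_2$), and your step ``pass to the cofinal complement of the discarded index'' is precisely where this surfaces --- the complement is cofinal exactly when the discarded index is not a greatest element. This is a defect of the statement as transcribed from \cite{LM} rather than of your proof, and your closing remark about the tacit nondegeneracy assumption covers it; it would be cleaner to state that assumption (the index set has no greatest element) up front.
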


The authors of \cite[Theorem 2]{CPW} classify a $C^*$-algebra with weak* and weak topologies coinciding on the dual unit sphere. Hence, Lim's condition gives a classification of $C^*$-algebras via condition $(a)$ above. Thus, an interesting question is to see if condition $(b)$ above also yields a similar classification. We answer this question in the affirmative. We refer to condition $(b)$ as property $(\dag)$ for $X$.

In the geometry of Banach spaces, it is often observed that restricting a property to norm-attaining functionals is sufficient to obtain desirable results, as it contains the extremal and facial information of the space. A careful observation of the proof of \cite[Theorem 2]{CPW} shows that it is enough to assume that norm-attaining functionals are the point of weak*-weak continuity of the dual unit sphere. So, we further generalize property $(\dagger)$ by restricting the net $\{\phi_\alpha\}$ and $\{\phi\}$ to norm-attaining functionals only and call this property $(\ddagger)$.
\begin{Def}
$X$ is said to satisfy property $(\ddag)$ if for any $0 <\e <2$, whenever $\{\phi_{\alpha}\}$ is a net of norm-attaining functionals in $B(X^*)$ with $\|\phi_{\alpha}-\phi_{\beta}\| \geq \e$ for all $\alpha \neq \beta$ and $\phi \in B(X^*)$ is norm-attaining such that $\phi_{\alpha} \xrightarrow{w^\ast} \phi$, then
\[
\|\phi\| \leq 1 - \e/2.
\]
\end{Def}

Surprisingly, this apparently weaker notion of Lim's condition turns out to be equivalent to Lim's condition (hence, property $(\dagger)$) for a $C^*$- algebra. Thus, we study geometric variations of Lim's condition by considering this weaker version. One of the main results of this article proves that the property $(\ddagger)$ leads to a purely geometric characterization of $C^*$-algebras of the form
\[
\mathcal{A}=\bigoplus_{c_0}\mathcal{L}(H_\alpha),
\]
for a family $\{H_\alpha\}_{\alpha \in \Delta}$ of finite dimensional Hilbert spaces, where $\mathcal{L}(H_\alpha)$ is the space of bounded operators on $H_\alpha$. Hence, we obtain a geometric classification of the class of $C^*$-algebras which are the $c_0$-direct sum of finite-dimensional operator spaces. Thus, property $(\ddagger)$ appears to be a minimal hypothesis needed for this classification. Also, this is an improvement in the structural information on $C^*$-algebras with Lim's condition, as discussed earlier.

We also extend Lau and Mah's result that $K(\ell_2)$ does not satisfy Lim's condition to any $\ell_p$ for $1 <p <\infty$. 

As mentioned earlier, Lim's condition implies $w^*$-normal structure in a Banach space (\cite[Theorem 2]{Lim}): For every $w^*$-compact convex set $K \subset X^\ast$ and for any non-trivial convex set $H \subset K$, there is a $x_0 \in H$ such that $\sup\{\|x_0-y\|:y\in H\}< \sup\{\|x-y\|:x~,y \in H\}$. Thus, by \cite[Theorem 1]{Lim}, every non-expansive mapping  $T: K \rightarrow K$ for a $w^*$-compact convex set $K$ has a fixed point. Hence, from our classification result, the dual of $c_0$-direct sum of finite-dimensional operator spaces satisfies $w^*$-normal structure and therefore the fixed-point property holds for non-expansive maps on $w^*$-compact subsets. 

Let $\Omega$ be a compact space and let $C(\Omega)$ denote the space of continuous functions, equipped with the supremum norm. For a uniform algebra $A \subset C(\Omega)$ (i.e., a closed subalgebra containing constants and separating points), Rao in \cite[Theorem 11]{Rao} shows that if the set of points of weak*-weak continuity is weakly dense in the dual unit sphere, then A is finite-dimensional. This motivates us to analyze property $(\ddag)$ on a uniform algebra. Interestingly, we observe that any uniform algebra satisfying property $(\ddag)$ is necessarily finite-dimensional. We use similar techniques to relate property $(\ddag)$ to $k$-smoothness in the class of Banach spaces $X$ whose dual is isometric to $L^1(\mu)$ for a positive measure $\mu$. These are called $L^1$-predual or Lindenstrauss, spaces and are considered as commutative analogues of $C^\ast$-algebras. Refer to \cite[Chapter 7]{L} for this discussion. As an application, we show that property $(\ddagger)$ implies that for any  $x \in S(X)$, $x$ is a $k$-smooth point for some $k>0$ (refer to the discussion just before Theorem ~\ref{L1predual} for the definition). 

So, the main contributions of the paper are:
\bla
\item Studying a weaker version of Lim's condition in the form of property $(\ddag)$ which is shown to be equivalent to Lim's condition in $C^*$-algebras.

\item A geometric characterization of $C^*$-algebras that are $c_0$-direct sum finite-dimensional operator spaces.

\item Extension of \cite[Theorem 5]{LM} from Hilbert spaces to $\ell_p$-spaces.

\item Property $(\ddag)$ forces uniform algebras to be finite-dimensional.

\item Property $(\ddag)$ implying k-smoothness in $L^1$-predual spaces.

\el

Thus, this paper demonstrates that an apparently weaker version of Lim's condition seems to govern the structure of $C^*$-algebras, $L^1$-predual spaces, and uniform algebras in a unifying way.

This paper is organized as follows. Section 2 discusses property $(\ddag)$ in $C^*$-algebras and Section 3 is concerned with property $(\ddag)$ in function spaces.

We refer to the monographs by R. B. Holmes \cite{Ho} and J. Diestel \cite{Di} for standard results from Banach space theory, \cite{L} for isometric theory of classical Banach spaces and \cite{A1} for $C^\ast$-algebra theory.

\section{Property $(\ddag)$ for $C^\ast$ algebras}
We begin with a few basic results on property $(\ddag)$.

\begin{prop}
	If $X$ has property $(\dagger)$, then every closed subspace of $X$ has property $(\dagger)$.
\end{prop}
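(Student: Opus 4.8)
The plan is to lift the defining net from $Y^{*}$ up to $X^{*}$ by the Hahn--Banach theorem, apply property $(\dagger)$ of $X$ there, and restrict the conclusion back down to $Y$. So let $Y\subseteq X$ be a closed subspace, fix $0<\e<2$, and let $\{\psi_{\alpha}\}_{\alpha\in A}$ be a net in $B(Y^{*})$ with $\|\psi_{\alpha}-\psi_{\beta}\|\ge \e$ for all $\alpha\neq\beta$ and $\psi_{\alpha}\xrightarrow{w^{*}}\psi$ for some $\psi\in B(Y^{*})$; the goal is to show $\|\psi\|\le 1-\e/2$.

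First I would use the Hahn--Banach theorem to pick, for each $\alpha$, a norm-preserving extension $\Phi_{\alpha}\in X^{*}$ of $\psi_{\alpha}$, so that $\Phi_{\alpha}|_{Y}=\psi_{\alpha}$ and $\|\Phi_{\alpha}\|=\|\psi_{\alpha}\|\le 1$; thus $\{\Phi_{\alpha}\}\subseteq B(X^{*})$. The key elementary point is that the restriction map $R\colon X^{*}\to Y^{*}$, $R(\Phi)=\Phi|_{Y}$, is norm-nonincreasing, so
\[
\|\Phi_{\alpha}-\Phi_{\beta}\|\ \ge\ \|R(\Phi_{\alpha}-\Phi_{\beta})\|\ =\ \|\psi_{\alpha}-\psi_{\beta}\|\ \ge\ \e \qquad (\alpha\neq\beta).
\]
Hence $\{\Phi_{\alpha}\}$ is again an $\e$-separated net, now in $B(X^{*})$ --- the separation is preserved, and in fact distances can only increase, when passing from $Y^{*}$ to $X^{*}$, which is exactly the direction we need.

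Next I would apply the Banach--Alaoglu theorem: $B(X^{*})$ is weak* compact, so $\{\Phi_{\alpha}\}$ has a subnet $\{\Phi_{\alpha_{\lambda}}\}_{\lambda\in\Lambda}$ converging weak* to some $\Phi\in B(X^{*})$ (as $B(X^{*})$ is weak* closed). This subnet is again $\e$-separated (distinct indices give distinct functionals, which are $\ge\e$ apart), so it meets the hypotheses of property $(\dagger)$ for $X$. To identify $\Phi$ on $Y$: for every $y\in Y$,
\[
\Phi(y)=\lim_{\lambda}\Phi_{\alpha_{\lambda}}(y)=\lim_{\lambda}\psi_{\alpha_{\lambda}}(y)=\psi(y),
\]
the last equality because $\psi_{\alpha}\xrightarrow{w^{*}}\psi$ in $Y^{*}$ forces the subnet $\psi_{\alpha_{\lambda}}(y)$ to converge to $\psi(y)$. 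Thus $\Phi|_{Y}=\psi$, and in particular $\|\psi\|=\|\Phi|_{Y}\|\le\|\Phi\|$. Now property $(\dagger)$ of $X$, applied to the $\e$-separated net $\{\Phi_{\alpha_{\lambda}}\}\subseteq B(X^{*})$ and its weak* limit $\Phi\in B(X^{*})$, gives $\|\Phi\|\le 1-\e/2$, whence $\|\psi\|\le 1-\e/2$, as required.

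There is no deep obstacle here; this is a routine Hahn--Banach-plus-compactness argument, and the real content is the single observation that restriction of functionals is a contraction, so the separation hypothesis survives (is even enhanced by) the passage to $X^{*}$. The only place calling for a little care is the passage to a weak* convergent subnet: one wants that subnet to have distinct indices so that the separation condition is literally inherited, which causes no difficulty because the extended functionals $\Phi_{\alpha}$ are pairwise distinct. Since the argument is carried out for a fixed but arbitrary $\e\in(0,2)$, it establishes property $(\dagger)$ for $Y$.
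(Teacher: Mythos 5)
Your proof is correct and follows essentially the same route as the paper's: extend each $\psi_{\alpha}$ by Hahn--Banach, note that restriction is a contraction so the $\e$-separation is preserved, pass to a weak* convergent subnet in $B(X^{*})$, apply property $(\dagger)$ in $X$, and restrict back to $Y$. The paper compresses the Banach--Alaoglu subnet step into a ``without loss of generality,'' but the argument is identical in substance.
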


\begin{proof}
	Let $Y$ be a closed subspace of $X$ and $0 <\e <2$. Consider $\{\phi_\alpha\}$ to be a net in $B(Y^*)$ and $\phi \in B(Y^*)$ such that $\phi_\alpha \xrightarrow{w^*} \phi$ and $\|\phi_\alpha - \phi_\beta\| \geq \e$ for all $\alpha \neq \beta$. Let $\ol{\phi_\alpha}$ be any norm-preserving extension of $\phi_\alpha$ and assume without loss of generality that $\ol{\phi_\alpha} \xrightarrow{w^*} z^*$ for some $z^* \in B(X^*)$. Clearly, $z^*|_Y=\phi$.
	
	Notice that, we have $\| \ol{\phi_\alpha}-\ol{\phi_\beta}\| \geq \|\phi_\alpha - \phi_\beta\| \geq \e$.
	
	So, $\|\phi\| \leq \|z^*\| \leq 1-\e/2$.
\end{proof}

Similar arguments yield the following corollary.

\begin{cor} \label{corhereddag}
	If $X$ has property $(\ddag)$, then any closed subspace $Y$ of $X$ has property $(\ddag)$.
\end{cor}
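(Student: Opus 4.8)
The plan is to imitate the proof of the preceding Proposition, carrying along the extra bookkeeping of norm-attainment. Fix a closed subspace $Y \subseteq X$ and $0 < \e < 2$, and suppose $\{\phi_\alpha\}$ is a net of norm-attaining functionals in $B(Y^*)$ with $\|\phi_\alpha - \phi_\beta\| \ge \e$ for $\alpha \ne \beta$ and $\phi_\alpha \xrightarrow{w^*} \phi$ for some norm-attaining $\phi \in B(Y^*)$; we must show $\|\phi\| \le 1 - \e/2$. For each $\alpha$, choose a norm-preserving Hahn--Banach extension $\ol{\phi_\alpha} \in X^*$ of $\phi_\alpha$; by Banach--Alaoglu, after passing to a subnet we may assume $\ol{\phi_\alpha} \xrightarrow{w^*} z^*$ for some $z^* \in B(X^*)$, and since restriction to $Y$ is $w^*$-continuous, $z^*|_Y = \phi$, so in particular $\|z^*\| \ge \|\phi\|$. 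Exactly as in the Proposition, $\|\ol{\phi_\alpha} - \ol{\phi_\beta}\| \ge \|\phi_\alpha - \phi_\beta\| \ge \e$, so $\{\ol{\phi_\alpha}\}$ is $\e$-separated.

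To conclude I would apply property $(\ddag)$ of $X$ to the net $\{\ol{\phi_\alpha}\}$ and the limit $z^*$; this requires both to consist of norm-attaining functionals. For the net this is automatic: if $\phi_\alpha$ attains its norm at $y_\alpha$, then $y_\alpha \in S(Y) \subseteq S(X)$ and $\ol{\phi_\alpha}(y_\alpha) = \phi_\alpha(y_\alpha) = \|\phi_\alpha\| = \|\ol{\phi_\alpha}\|$, so $\ol{\phi_\alpha}$ is norm-attaining. For the limit, note that $\phi$ attains its norm at some $y_0 \in S(Y) \subseteq S(X)$, whence every norm-preserving extension of $\phi$ to $X$ is norm-attaining (at $y_0$); so it is enough to arrange the choice of extensions so that the resulting $z^*$ is itself norm-attaining. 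Granting this, property $(\ddag)$ of $X$ yields $\|\phi\| \le \|z^*\| \le 1 - \e/2$, as desired.

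The step that requires genuine care --- and which I expect to be the main obstacle --- is guaranteeing that the weak*-limit $z^*$ of the chosen extensions is norm-attaining. Because the norm is only $w^*$-lower semicontinuous, an arbitrary weak*-cluster point of the $\ol{\phi_\alpha}$ only satisfies $\|\phi\| \le \|z^*\| \le \liminf_\alpha\|\phi_\alpha\|$, and these inequalities may be strict, so $z^*$ need not be a norm-preserving extension of $\phi$. I would handle this by first passing to a subnet along which $\|\phi_\alpha\|$ converges, and then using that the norm-preserving extensions of $\phi$ form a nonempty $w^*$-compact convex subset of $B(X^*)$ together with a selection/compactness argument to force the limit $z^*$ into that set (equivalently, to replace $\{\ol{\phi_\alpha}\}$ by extensions converging to a norm-preserving extension of $\phi$). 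Once this is settled, the rest of the argument is routine and mirrors the proof of the Proposition line by line.
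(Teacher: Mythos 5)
Your proposal follows the same route as the paper, which proves this corollary simply by asserting that ``similar arguments'' to the preceding proposition apply: extend each $\phi_\alpha$ in a norm-preserving way, pass to a $w^*$-convergent subnet, observe that the $\e$-separation is preserved, and invoke the property in $X$. You correctly handle the first piece of extra bookkeeping (each $\ol{\phi_\alpha}$ attains its norm at the same point of $S(Y)\subseteq S(X)$ where $\phi_\alpha$ does), and you correctly isolate the one genuinely new issue, which the paper's one-line proof glosses over: property $(\ddag)$ of $X$ can only be applied if the $w^*$-limit $z^*$ is itself norm-attaining, and a priori it need not be.

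However, the repair you sketch does not work, so as written the proposal has a gap at exactly the point you flag. You propose to choose the extensions so that $z^*$ lands in the ($w^*$-compact convex) set of norm-preserving extensions of $\phi$; this is impossible in general. Take $Y=c_0\subset X=c$ and $\phi_n=e_n\in\ell_1=c_0^*$, so that $\phi_n\xrightarrow{w^*}\phi=0$ and $\|\phi_n-\phi_m\|=2$ for $n\neq m$. Each $\phi_n$ has a \emph{unique} norm-preserving extension to $c$, namely $e_n$ itself (any mass on the $\lim$-functional strictly increases the $\ell_1$-norm), and $e_n\xrightarrow{w^*}\delta_\infty$, the $\lim$-functional, whereas the unique norm-preserving extension of $\phi=0$ is $0\neq\delta_\infty$. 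Hence no selection of norm-preserving extensions can converge to a norm-preserving extension of $\phi$, and no compactness argument will force this. What you actually need is weaker --- only that $z^*$ attains its norm \emph{somewhere} on $S(X)$ (in the example $\delta_\infty$ does, at the constant sequence $1$) --- but neither your argument nor the paper's establishes this in general; that is the step that would have to be supplied for a complete proof along these lines.
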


We now look at property $(\ddag)$ in $\ell_p$-spaces via the following example.

\begin{ex}
Let $\{e_n\}$ be a sequence in $\ell_q$, where $e_n$ is the sequence of elements with $1$ at $n^{th}$ position and $0$ elsewhere. Let $\phi_n=e_n/(2)^{1/q}$ for all $n$ and $\phi=\phi_1$.
	
We have that, $\phi_n \xrightarrow{w^\ast} 0$. For all $n \geq 2$, let $\psi_n:=\phi_n + \phi$. Then, $\psi_n\xrightarrow{w^\ast} \phi$. Now, $\|\psi_n\|=1$ for all $n \geq 2$, that is, $\psi_n \in B(\ell_q)$. Since $\ell_q$ is reflexive, $\psi_n$ and $\phi$ are norm-attaining for each $n \in \N$. Also, for $n \neq m$, $\|\psi_n-\psi_m\|=1$.
	
But, $\|\phi\|=1/2^{1/q} >1 - 1/2$. Thus, $\ell_p$ does not satisfy property $(\ddagger)$. Also, it is known that $\ell_p$ is embedded in $\K(\ell_p)$ and by Corollary ~\ref{corhereddag}, property $(\ddag)$ is hereditary. So, $\K(\ell_p)$ does not have property $(\ddag)$.

Hence, this leads to an improvement in \cite[Theorem 5]{LM} for the space of compact operators $\K(\ell_2)$ to $\K(\ell_p)$ for $1 <p <\infty$.
\end{ex}

The result below is a substantial variation of \cite[Theorem 2]{CPW} and in particular, shows among other things that being a modular annihilator algebra can be determined by $(\ddag)$.
\begin{thm} \label{thmdagsubdif}
	
	 Let ${\mathcal A}$ be a $C^\ast$-algebra satisfying condition $(\ddag)$. Then ${\mathcal A}$ is isometrically $*$-isomorphic to  a $c_0$-direct sum $\bigoplus_{c_0} {\mathcal L}(H_{\alpha})$, for a family of finite-dimensional Hilbert spaces $\{H_{\alpha}\}_{\alpha \in J}$, for some index set $J$, and spaces of bounded operators ${\mathcal L}(H_{\alpha})$.
\end{thm}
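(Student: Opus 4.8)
The plan is to combine the known structure theory of $C^*$-algebras with the geometric hypothesis $(\ddag)$ via the characterization in \cite[Theorem 2]{CPW}. Recall that a $C^*$-algebra in which weak* and weak topologies coincide on $S(\mathcal{A}^*)$ is precisely a dual $C^*$-algebra (equivalently, a modular annihilator $C^*$-algebra, equivalently a $c_0$-direct sum of elementary algebras $\mathcal{K}(H_\alpha)$). So the goal is really two-fold: first, show $(\ddag)$ forces $\mathcal{A}$ to be such a $c_0$-direct sum $\bigoplus_{c_0}\mathcal{K}(H_\alpha)$; second, show the extra geometric strength of $(\ddag)$ (as opposed to merely condition $(a)$) forces each $H_\alpha$ to be finite-dimensional, so that $\mathcal{K}(H_\alpha) = \mathcal{L}(H_\alpha)$.

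For the first part, I would argue by contradiction: if $\mathcal{A}$ is not a dual $C^*$-algebra, then by the Calkin-type dichotomy it contains (a copy of, or a quotient leading to) $\mathcal{K}(\ell_2)$ or more robustly it fails condition $(a)$ of the restated Lau--Mah theorem, i.e.\ there is a net on $S(\mathcal{A}^*)$ converging weak* but not in norm. The subtle point is to upgrade such a net into a \emph{separated} net of \emph{norm-attaining} functionals with a norm-attaining weak* limit of norm close to $1$, violating $(\ddag)$. Here is where the $C^*$-structure helps: pure states and, more generally, states are norm-attaining, and in a $C^*$-algebra one has a rich supply of norm-attaining functionals coming from the GNS construction and from minimal projections. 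Alternatively — and this is probably the cleaner route — one shows directly that property $(\ddag)$ implies condition $(a)$ on the \emph{norm-attaining part} of $S(\mathcal{A}^*)$, invokes the remark in the introduction that \cite[Theorem 2]{CPW} only needs norm-attaining functionals to be points of weak*-weak continuity, and concludes $\mathcal{A} \cong \bigoplus_{c_0}\mathcal{K}(H_\alpha)$.

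For the second part, suppose some $H_{\alpha_0}$ is infinite-dimensional, so $\mathcal{K}(H_{\alpha_0}) \hookrightarrow \mathcal{A}$, and by Corollary \ref{corhereddag} (heredity of $(\ddag)$) $\mathcal{K}(\ell_2)$ would have property $(\ddag)$. But $\ell_2$ embeds isometrically in $\mathcal{K}(\ell_2)$ (as a column, say), and by the Example above $\ell_2$ fails $(\ddag)$ — contradiction with heredity. Hence every $H_\alpha$ is finite-dimensional, and then $\mathcal{K}(H_\alpha) = \mathcal{L}(H_\alpha)$, giving the stated form. Finally I would verify the converse is automatic from the context or cite that such algebras do satisfy the condition (the introduction already asserts the characterization is an equivalence, so the forward direction proved here together with the $C^*$-algebra facts suffices for the theorem as stated).

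The main obstacle I anticipate is the first part: carefully producing, from the failure of norm--weak* coincidence, an $\e$-separated net of norm-attaining functionals whose weak* limit is itself norm-attaining with norm $> 1 - \e/2$. One must be careful that the limit functional is norm-attaining (not merely in $B(\mathcal{A}^*)$), since $(\ddag)$ only constrains nets and limits of that restricted type; handling this likely requires exploiting that in the relevant non-dual $C^*$-algebra one can take the limit to be a state (hence norm-attaining), perhaps after passing to an appropriate subalgebra or quotient. The reduction via \cite[Theorem 2]{CPW} in its norm-attaining-refined form is what makes this tractable, so pinning down that refined version precisely is the crux.
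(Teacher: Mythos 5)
Your second half (finite-dimensionality of each $H_\alpha$ via heredity of $(\ddag)$, the embedding of $\ell_2$ into $\mathcal{K}(\ell_2)$, and the Example showing $\ell_p$ fails $(\ddag)$) is exactly the paper's argument. The problem is your first half, which you yourself flag as the crux and never actually carry out: you propose to derive the $c_0$-decomposition from a failure of weak*--norm coincidence on $S(\mathcal{A}^*)$ by "upgrading" a weak*-convergent, non-norm-convergent net into an $\varepsilon$-separated net of \emph{norm-attaining} functionals with a \emph{norm-attaining} weak* limit of norm exceeding $1-\varepsilon/2$. In a general non-dual $C^*$-algebra there is no evident way to do this upgrade, and gesturing at GNS, states, and minimal projections does not produce the required net; so as written the first part is a genuine gap, not a proof.

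The paper avoids this difficulty entirely by verifying a different clause of \cite[Theorem 2]{CPW}, namely condition $(iii)$: for every normal $x \in S(\mathcal{A})$ the spectrum $\sigma(x)$ is discrete with $0$ as its only possible accumulation point. One passes to the commutative $C^*$-subalgebra generated by $\{x, x^*\}$, identified with $C_0(\sigma(x))$, which inherits $(\ddag)$ by Corollary~\ref{corhereddag}. There the Dirac measures $\delta(\omega)$ are automatically norm-attaining and satisfy $\|\delta(\omega_1)-\delta(\omega_2)\| = 2$ for $\omega_1 \neq \omega_2$; so if $\sigma(x)$ had a nonzero accumulation point $\omega_0$, a net $\delta(\omega_\alpha) \xrightarrow{w^*} \delta(\omega_0)$ of distinct Dirac measures would be $\varepsilon$-separated for every $\varepsilon < 2$ while $\|\delta(\omega_0)\| = 1$, contradicting $(\ddag)$. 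This is exactly the supply of norm-attaining, separated functionals with a norm-attaining limit that your approach lacks; the commutative localization is what makes it available. Once condition $(iii)$ holds, \cite[Theorem 2]{CPW} delivers $\mathcal{A} \cong \bigoplus_{c_0}\mathcal{K}(H_\alpha)$, and your finite-dimensionality argument finishes the proof. If you want to salvage your route, you should replace the vague "upgrade the net" step with this reduction to normal elements and their spectra.
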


\begin{proof}
	 We verify Condition $(iii)$ of \cite[Theorem 2]{CPW}, to get the required conclusion. Let $x \in S({\mathcal A})$  be a normal element and consider the commutative $C^*$-algebra generated by $\{x,~x^*\}$. By the Gelfand-Naimark theorem, we have the identification of this separable subalgebra as $C_0(\sigma(x))$. We also have that $\sigma(x)$ is a locally compact metric space. Since $(\ddag)$ is a hereditary property, the separable commutative $C^*$-subalgebra $C_0(\sigma(x))$ has property $(\ddag)$. By the remarks made at the start of Section 2, we get that $\sigma(x)$ is a discrete set with $0$ as the only possible accumulation point. Hence, we have the decomposition $(v)$ from \cite[Theorem 2]{CPW}. Now, by Corollary ~\ref{corhereddag} and the example following it, we see that the Hilbert spaces under consideration are finite-dimensional.
	
\end{proof}

\begin{lem} \label{lnormal}
	Let $X$ has property $(\dagger)$ and $0 \leq s \leq 1$. Then, for every $0 <\e <2s$, whenever $\{\phi_{\alpha}\}$ is a net in $X^*$ with $\|\phi_{\alpha}\| \leq s$ for all $\alpha$ and $\phi \in B(X^*)$ is such that $\phi_{\alpha} \xrightarrow{w^\ast} \phi$ and $\|\phi_{\alpha}-\phi_{\beta}\| \geq \e$ for all $\alpha \neq \beta$, then
	\[
	\|\phi\| \leq s - \e/2.
	\]
\end{lem}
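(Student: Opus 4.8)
The plan is to deduce this from property $(\dagger)$ by a simple normalization of the net. First note that the case $s=0$ is vacuous, since then there is no $\e$ with $0<\e<2s$; so assume $s>0$.

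Given a net $\{\phi_\alpha\}$ and a limit $\phi$ as in the statement, I would pass to $\psi_\alpha := \phi_\alpha/s$ for each $\alpha$ and set $\psi := \phi/s$. Then $\|\psi_\alpha\| = \|\phi_\alpha\|/s \le 1$, so $\{\psi_\alpha\}$ is a net in $B(X^*)$; scalar multiplication is $w^*$-continuous, so $\psi_\alpha \xrightarrow{w^*} \psi$; and the separation rescales to $\|\psi_\alpha-\psi_\beta\| = \|\phi_\alpha-\phi_\beta\|/s \ge \e/s$ for all $\alpha\ne\beta$. Since the norm on $X^*$ is $w^*$-lower semicontinuous, $\|\phi\| \le \liminf_\alpha \|\phi_\alpha\| \le s$, hence $\psi \in B(X^*)$ as well. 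Finally, $\e':=\e/s$ satisfies $0<\e'<2$, because $0<\e<2s\le 2$.

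Now property $(\dagger)$, applied to the net $\{\psi_\alpha\}$, the limit $\psi$, and the number $\e'$, gives $\|\psi\| \le 1 - \e'/2 = 1 - \e/(2s)$. Multiplying by $s$ yields $\|\phi\| \le s - \e/2$, which is the assertion. There is no serious obstacle in this argument; the only two points that genuinely need to be checked are that $\psi=\phi/s$ lands in $B(X^*)$ (which is exactly $w^*$-lower semicontinuity of the dual norm together with $\|\phi_\alpha\|\le s$) and that the rescaled gap $\e/s$ remains below $2$ (which is precisely the hypothesis $\e<2s$, explaining why that restriction is imposed).
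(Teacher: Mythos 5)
Your proposal is correct and follows essentially the same route as the paper: rescale the net by $s$, apply property $(\dagger)$ with $\e'=\e/s$, and multiply back. The only difference is that you explicitly verify $\psi=\phi/s\in B(X^*)$ via $w^*$-lower semicontinuity of the dual norm (and dispose of the vacuous case $s=0$), points the paper's proof passes over silently.
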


\begin{proof}
	Let $0 \leq s \leq 1$ and $0 <\e <2s$. Suppose $\{\phi_{\alpha}\}$ is a net in $X^*$ with $\|\phi_{\alpha}\| \leq s$ for all $\alpha$ and $\phi \in B(X^*)$ is such that $\phi_{\alpha} \xrightarrow{w^\ast} \phi$ and $\|\phi_{\alpha}-\phi_{\beta}\| \geq \e$ for all $\alpha \neq \beta$.
	We have, $0 <\e/s <2$. Let $\psi_{\alpha}=\phi_{\alpha}/s$ for all $\alpha$ and $\psi=\phi/s$. Then, $\psi_\alpha \in B(X^*)$ for all $\alpha$, $\psi_{\alpha} \xrightarrow{w^\ast} \psi$ and $\|\psi_{\alpha}-\psi_{\beta}\| \geq \e/s$. Therefore,
	$\|\psi\| \leq 1 - \e/2s$. This implies, $\|\phi\| \leq s - \e/2$.
\end{proof}

We next consider direct sums with the maximum or supremum norm.
\begin{prop} \label{tstab}
	Let $Z = X \bigoplus_{\ell_\infty} Y$ for some Banach spaces $X$ and $Y$. Then $Z$ has property $(\dagger)$ if and only if $X$ and $Y$ have property $(\dagger)$.
\end{prop}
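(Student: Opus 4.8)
The plan has two halves. For the forward implication, I would note that $X$ and $Y$ are isometric to the closed subspaces $X\bigoplus_{\ell_\infty}\{0\}$ and $\{0\}\bigoplus_{\ell_\infty}Y$ of $Z$ and apply the fact, established above, that property $(\dagger)$ passes to closed subspaces.

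For the converse, I would work through the duality $Z^\ast=X^\ast\bigoplus_{\ell_1}Y^\ast$, under which a net $\phi_\alpha=(f_\alpha,g_\alpha)$ satisfies $\|\phi_\alpha\|=\|f_\alpha\|+\|g_\alpha\|$ and $\phi_\alpha\xrightarrow{w^\ast}(f,g)$ precisely when $f_\alpha\xrightarrow{w^\ast}f$ and $g_\alpha\xrightarrow{w^\ast}g$. The first step is to restate $(\dagger)$ in a form that adds over direct sums. For a bounded net $\{h_\alpha\}$ in a dual space put
\[
\mathrm{sep}_\infty(\{h_\alpha\})\ :=\ \lim_\gamma\,\inf\{\,\|h_\alpha-h_\beta\|:\alpha,\beta\succeq\gamma,\ \alpha\neq\beta\,\},
\]
the limit existing because the inner infimum is non-decreasing in $\gamma$. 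Passing to a suitable tail and rescaling into $B(W^\ast)$ (so that $(\dagger)$, equivalently Lemma~\ref{lnormal}, applies), one checks that a Banach space $W$ has property $(\dagger)$ if and only if every bounded net $\{h_\alpha\}$ in $W^\ast$ with $h_\alpha\xrightarrow{w^\ast}h$ obeys
\[
\|h\|\ \le\ \limsup_\alpha\|h_\alpha\|\ -\ \tfrac12\,\mathrm{sep}_\infty(\{h_\alpha\}).
\]
The ``if'' direction is immediate from an $\varepsilon$-separated net in $B(W^\ast)$; for ``only if'' one passes to a tail that is almost $\mathrm{sep}_\infty$-separated and almost bounded by $\limsup\|h_\alpha\|$ and applies $(\dagger)$ there. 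Since $X$ and $Y$ have $(\dagger)$, this gives $\|f\|\le\limsup_\alpha\|f_\alpha\|-\tfrac12\mathrm{sep}_\infty(\{f_\alpha\})$ and the analogue for $g$.

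Now take the data witnessing $(\dagger)$ for $Z$: $\phi_\alpha=(f_\alpha,g_\alpha)$ in $B(Z^\ast)$ with $\|\phi_\alpha-\phi_\beta\|\ge\varepsilon$ for $\alpha\neq\beta$ and $\phi_\alpha\xrightarrow{w^\ast}(f,g)$. I would first pass to a subnet with $\|f_\alpha\|\to a$ and $\|g_\alpha\|\to b$ (so $a+b\le1$), then fix $\eta>0$, partition $[0,2]$ into finitely many intervals of length $<\eta$, and colour each pair $\{\alpha,\beta\}$ by the interval containing $\|f_\alpha-f_\beta\|$ together with the one containing $\|g_\alpha-g_\beta\|$. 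On a subnet on which this finite colouring is constant there are $\varepsilon_1,\varepsilon_2\ge0$ with $\|f_\alpha-f_\beta\|\ge\varepsilon_1$ and $\|g_\alpha-g_\beta\|\ge\varepsilon_2$ for all $\alpha\neq\beta$, hence $\mathrm{sep}_\infty(\{f_\alpha\})\ge\varepsilon_1$ and $\mathrm{sep}_\infty(\{g_\alpha\})\ge\varepsilon_2$; and from $\varepsilon\le\|f_\alpha-f_\beta\|+\|g_\alpha-g_\beta\|<\varepsilon_1+\varepsilon_2+2\eta$ also $\varepsilon_1+\varepsilon_2>\varepsilon-2\eta$. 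The two norm limits are still $a$ and $b$, so
\[
\|f\|+\|g\|\ \le\ \Big(a-\tfrac{\varepsilon_1}{2}\Big)+\Big(b-\tfrac{\varepsilon_2}{2}\Big)\ =\ (a+b)-\tfrac{\varepsilon_1+\varepsilon_2}{2}\ <\ 1-\tfrac{\varepsilon}{2}+\eta;
\]
letting $\eta\to0$ gives $\|(f,g)\|\le1-\varepsilon/2$, which is $(\dagger)$ for $Z$.

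The hard part will be the passage to a subnet on which the finite colouring is constant. When the index set is $\mathbb{N}$ this is just Ramsey's theorem for pairs, but for a net of uncountable cofinality one must produce such a subnet that is \emph{also} cofinal in the original index set, so that its $w^\ast$-limit is still $(f,g)$. I would handle this by fixing an ultrafilter on the index set refining the filter of tails — this simultaneously pins down the limiting norms $a,b$ and the homogeneous colour — and then extracting the subnet by a transfinite construction along it; this is where the care is needed. The remaining degenerate cases ($\varepsilon_1=0$ or $\varepsilon_2=0$, handled by $w^\ast$-lower semicontinuity of the norm on the relevant summand) are routine.
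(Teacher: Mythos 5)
Your proof follows essentially the same route as the paper's: the forward direction via heredity of $(\dagger)$, and the converse via the identification $Z^{*}=X^{*}\bigoplus_{\ell_1}Y^{*}$, passing to subnets to (approximately) stabilize the component norms and pairwise separations, applying the rescaled form of $(\dagger)$ (Lemma~\ref{lnormal}) to each summand, and adding the two estimates. The pair-homogenization step you single out as delicate is exactly the step the paper dispatches with ``passing to subnets if necessary, let $s'=\|x^{*}_{\alpha}-x^{*}_{\beta}\|$ for all $\alpha\neq\beta$'', so your $\eta$-approximate colouring version is, if anything, a more careful rendering of the published argument (though note that an ultrafilter on the index set alone does not homogenize a colouring of \emph{pairs}, a point neither you nor the paper fully resolves for nets of uncountable cofinality).
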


\begin{proof}
	Clearly, if $Z$ has property $(\dagger)$, then $X$ and $Y$ being subspaces of $Z$ and $(\dagger)$ being hereditary,	$X$ and $Y$ have property $(\dagger)$.
	
	For the proof of the converse part, we first note that $Z^* = X^* \bigoplus_{\ell_1} Y^*$.
	We denote the norms on $X$ and $Y$ also by $\|\cdot\|$.
	Let $0 <\e <2$, $\{\phi_{\alpha}\}$ be a net in $B(Z^*)$ and $\phi \in B(Z^*)$ such that $\phi_{\alpha} \xrightarrow{w^\ast} \phi$ and $\|\phi_{\alpha}-\phi_{\beta}\| \geq \e$ for all $\alpha \neq \beta$. We now choose components from the corresponding summands in the dual and also assume, without loss of generality, that the components of $\phi$ are non-zero. For each $\alpha$, let $x^*,y^*,x_{\alpha}^*, y_{\alpha}^*$ be such that $\phi_{\alpha}=x_{\alpha}^* + y_{\alpha}^*$ for all $\alpha$ and $\phi=x^* + y^*$. We have, $\|\phi_\alpha\| = \|x_{\alpha}^*\|+\|y_{\alpha}^*\| \leq 1$. Passing to subnets, if necessary, we may assume that $\|x_{\alpha}^*\|=s$ for some $0 \leq s \leq 1$ and for all $\alpha$. So, $\|y_{\alpha}^*\| \leq 1-s$. We also have, $x_\alpha^* \xrightarrow{w^\ast} x^*$ and $y_\alpha^* \xrightarrow{w^*} y^*$.
	
	We have, $\|\phi_\alpha - \phi_\beta\|=\|x_\alpha^*-x_\beta^*\| + \|y_\alpha^*-y^*\| \geq \e$. Again, passing to subnets if necessary, let $s'=\|x_\alpha^*-x_\beta^*\|$ for all $\alpha \neq \beta$. So, $\|y_\alpha^*-y_\beta^*\| \geq \e-s'$ for all $\alpha \neq \beta$.
	
	Since, $X$ and $Y$ have property $(\dagger)$, by Lemma ~\ref{lnormal},
	$\|x^*\| \leq s - (\e-s')/2$ and $\|y^*\| \leq (1-s) - s'/2$. Thus,
	\[
	\|\phi\|=\|x^*\| + \|y^*\| \leq 1 - \e.
	\]
\end{proof}

Clearly the above arguments extend to finite sums. In the proof of Proposition 9, we note that if $\phi$ attains its norm, so do the component functions. Thus Proposition 9 is also valid for components having property $(\ddag)$. 

Now, we look at the stability of property $(\ddag)$ under $c_0$-sum.
We have the following result for property $(\ddag)$ for general $c_0$-sums. Let $I$ denote the indexing set for a family of Banach spaces. For notational convenience, we omit writing the indexing set.
\begin{thm}
	Let $\{X_\gamma\}$ be any collection of  Banach spaces. Then $X=\bigoplus_{c_0}X_\gamma$ has property $(\ddag)$ if and only if $X_\gamma$ has $(\ddag)$ for each $\gamma$.
\end{thm}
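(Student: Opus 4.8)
The plan is to dispatch the implication $X\Rightarrow X_\gamma$ by heredity, and to prove the converse by splitting off the finite block supporting the limit functional and exploiting that the rest of the net tends to $0$ in the $w^*$-topology.

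For the ``only if'' part there is nothing new: each $X_\gamma$ is isometrically the closed subspace of $X=\bigoplus_{c_0}X_\gamma$ consisting of the vectors supported in the single coordinate $\gamma$, so $(\ddag)$ passes to it by Corollary~\ref{corhereddag}. For the ``if'' part, identify $X^*$ with $\bigoplus_{\ell_1}X_\gamma^*$, so that $\|\psi\|=\sum_\gamma\|\psi^\gamma\|$ and $\phi_\alpha\xrightarrow{w^*}\phi$ in $X^*$ forces $\phi_\alpha^\gamma\xrightarrow{w^*}\phi^\gamma$ in each $X_\gamma^*$. The first observation I would record is that a norm-attaining functional $\psi\in X^*$ has finite support: if $\psi$ attains its norm at $x=(x_\gamma)\in B(X)$, then the chain
\[
\|\psi\|=\sum_\gamma\psi^\gamma(x_\gamma)\le\sum_\gamma|\psi^\gamma(x_\gamma)|\le\sum_\gamma\|\psi^\gamma\|\,\|x_\gamma\|\le\sum_\gamma\|\psi^\gamma\|=\|\psi\|
\]
collapses, forcing $\|x_\gamma\|=1$ whenever $\psi^\gamma\neq0$; since $x\in c_0$ this happens for only finitely many $\gamma$, and for those $\gamma$ the component $\psi^\gamma$ attains its norm at $x_\gamma$.

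Now start from a net $\{\phi_\alpha\}$ of norm-attaining functionals in $B(X^*)$ with $\|\phi_\alpha-\phi_\beta\|\ge\e$ and $\phi_\alpha\xrightarrow{w^*}\phi$, $\phi$ norm-attaining, and let $F$ be the finite support of $\phi$. Let $P_F,P_{F^c}$ be the coordinate projections of $X^*$ onto the $F$-block and its complement; these are $w^*$-to-$w^*$ continuous (being adjoints of the coordinate projections of $X$), they satisfy $\|P_F\psi\|+\|P_{F^c}\psi\|=\|\psi\|$, and $P_F\phi_\alpha\xrightarrow{w^*}\phi$ while $P_{F^c}\phi_\alpha\xrightarrow{w^*}0$. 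Using the vector at which $\phi_\alpha$ attains its norm one checks that $P_F\phi_\alpha$ and $\phi$ are norm-attaining on the finite sum $X_F:=\bigoplus_{\ell_\infty,\,\gamma\in F}X_\gamma$, which has property $(\ddag)$ by Proposition~\ref{tstab} and the remark following it. The final step is the bookkeeping. Pass to a subnet with $\|P_F\phi_\alpha\|\to a$ and $\|P_{F^c}\phi_\alpha\|\to t$, so $a+t=\lim\|\phi_\alpha\|\le1$, and $\|\phi\|\le a$ by $w^*$-lower semicontinuity of the norm. If $a\le1-\e/2$ we are done. Otherwise $t\le1-a<\e/2$; fixing a small $\delta>0$ and restricting to a tail of the net on which $\|P_F\phi_\alpha\|<a+\delta$ and $\|P_{F^c}\phi_\alpha\|<t+\delta$, we get
\[
\|P_F\phi_\alpha-P_F\phi_\beta\|=\|\phi_\alpha-\phi_\beta\|-\|P_{F^c}\phi_\alpha-P_{F^c}\phi_\beta\|>\e-2(t+\delta)>0 .
\]
Feeding this into the property-$(\ddag)$ analogue of Lemma~\ref{lnormal} inside $X_F$ — which follows by exactly the rescaling proof of Lemma~\ref{lnormal}, since multiplying a functional by a positive scalar does not change where it attains its norm — with norm bound $a+\delta$ and separation $\e-2(t+\delta)$ (the constraint that the separation is below twice the norm bound being immediate from $\e\le\|\phi_\alpha-\phi_\beta\|\le\|\phi_\alpha\|+\|\phi_\beta\|$), one obtains
\[
\|\phi\|\le(a+\delta)-\tfrac12\big(\e-2(t+\delta)\big)=(a+t)-\tfrac{\e}{2}+2\delta\le1-\tfrac{\e}{2}+2\delta ,
\]
and letting $\delta\to0$ finishes the proof.

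The crux — and the one place a naive argument breaks — is this distribution of $\e$: the mass $t$ that the net carries outside $F$ costs us twice, lowering the ceiling for $\|\phi\|$ to $a\le 1-t$ and leaving only about $\e-2t$ of separation available inside $X_F$, and the estimate closes precisely because one feeds the \emph{sharp} norm bound $a$ (equivalently $\approx1-t$) rather than the crude bound $1$ into the Lemma~\ref{lnormal} analogue, whereupon the two copies of $t$ cancel. A related subtlety is that stability of $(\ddag)$ under \emph{infinite} $c_0$-sums cannot be read off the ``finite sums'' remark after Proposition~\ref{tstab}; one genuinely has to peel off the finite block $F$ supporting $\phi$ and use that $P_{F^c}\phi_\alpha\xrightarrow{w^*}0$.
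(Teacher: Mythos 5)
Your proof is correct, and while it opens the same way as the paper's (heredity via Corollary~\ref{corhereddag} for the forward direction; the identification $X^*=\bigoplus_{\ell_1}X_\gamma^*$ and the observation that norm-attaining functionals have finite support for the converse), it diverges at the crucial reduction step, and in a way that is genuinely better. The paper, after noting finite supports, says ``without loss of generality, we may assume that for each $\alpha$, $\phi_\alpha$ has only first $n$ coordinates non-zero'' and then proceeds as in the finite $\ell_\infty$-sum case; but the supports of the $\phi_\alpha$ depend on $\alpha$ and their union need not be finite, so this WLOG is not justified as stated. You instead peel off only the finite support $F$ of the limit $\phi$, use $w^*$-continuity of the coordinate projections to get $P_{F^c}\phi_\alpha\xrightarrow{w^*}0$, track the escaping mass $t=\lim\|P_{F^c}\phi_\alpha\|$, and close the estimate by feeding the sharp norm bound $a\approx 1-t$ and the reduced separation $\approx\e-2t$ into the $(\ddag)$-version of Lemma~\ref{lnormal} on the finite block $X_F$ (which has $(\ddag)$ by Proposition~\ref{tstab} and the remark following it). The bookkeeping $\|\phi\|\le (a+t)-\e/2\le 1-\e/2$ is exactly right, the side conditions ($\e-2t>0$ in the nontrivial case, separation below twice the norm bound, norm-attainment of $P_F\phi_\alpha$ on $X_F$ and invariance of norm-attainment under positive scaling) are all checked, and your closing remark correctly identifies why the infinite case cannot simply be read off the finite-sum statement. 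In short: same skeleton, but your treatment of the passage from finite supports to a single finite block supplies an argument the paper only gestures at.
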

\begin{proof}
Again, the hereditary nature of $(\ddag)$ implies that $X_\gamma$ has $(\ddag)$ for each $\gamma$.
	
Conversely, we have $X^*=\bigoplus_{\ell_1}X_\gamma^*$.
Again, we denote the norm on each $X_\gamma$ as $\|\cdot\|$. In what follows, we use the standard identification of $X^\ast$, in particular, when the indexing set $I$ is uncountable, $0 \neq \phi \in B(X^\ast)$ has only countably many non-zero coordinates.
	
Let $0 <\e <2$, $\{\phi_\alpha\}$ be a net of norm-attaining functionals in $B(X^*)$ with $\|\phi_\alpha - \phi_\beta\| \geq \e$ for all $\alpha \neq \beta$ and $\phi \in B(X^*)$ be a norm-attaining functional such that $\phi_{\alpha} \xrightarrow{w^\ast} \phi$.
\vskip .5em
We first note that any norm-attaining $0 \neq \phi \in B(X^*)$ has only finitely many non-zero coordinates. To see this, suppose $x \in S(X)$ and $$\|\phi\|= \sum \phi(\gamma)(x(\gamma))\leq \sum \|\phi(\gamma)\|x(\gamma)\|\leq \|\phi\|.$$
Thus, whenever $\phi(\gamma) \neq 0$, $\|x(\gamma)\|=1$ and it is easy to see that the corresponding $\phi(\gamma)$ attains its norm at $x(\gamma) \in S(X_{\gamma})$. Since $\{\|x(\gamma)\|\}$ vanishes at infinity, we obtain that $\phi$ has only finitely many non-zero coordinates.
	
	 So, without loss of generality, we may assume that for each $\alpha$, $\phi_{\alpha}$ has only first $n$ coordinates non-zero.
	
	Therefore, proceeding as in the proof of  Theorem ~\ref{tstab}, we obtain that $\|\phi\| \leq 1-\e/2$.
\end{proof}

The following is an interesting consequence of the hereditary nature of property $(\ddag)$.

\begin{cor}Let ${\mathcal A}$ be a $C^\ast$-algebra. If every separable commutative $C^\ast$-subalgebra has property $(\ddag)$, then ${\mathcal A}$ has $(\ddag)$.
\end{cor}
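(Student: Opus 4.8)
The plan is to mirror the proof of Theorem~\ref{thmdagsubdif}, feeding in the hypothesis where there the conclusion that $\mathcal{A}$ has $(\ddag)$ was invoked. Concretely, I would use the assumption on separable commutative subalgebras to verify condition $(iii)$ of \cite[Theorem 2]{CPW}, invoke that theorem to write $\mathcal{A}\cong\bigoplus_{c_0}\mathcal{L}(H_\alpha)$ with each $H_\alpha$ finite dimensional, and then read off property $(\ddag)$ for $\mathcal{A}$ from the $c_0$-sum stability theorem established just above.

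For the first step, I would fix a normal element $x\in S(\mathcal{A})$ and pass to the separable commutative $C^\ast$-subalgebra $\mathcal{B}_x=C^\ast(x,x^\ast)$, which by Gelfand--Naimark is $C_0(\sigma(x))$ with $\sigma(x)$ locally compact and metrizable. By hypothesis $\mathcal{B}_x$ has $(\ddag)$, so, exactly as in the proof of Theorem~\ref{thmdagsubdif} (the remarks at the start of Section~2), $\sigma(x)$ is a discrete set with $0$ as its only possible accumulation point. Since $x$ was an arbitrary normal element, this is condition $(iii)$ of \cite[Theorem 2]{CPW}, and we obtain decomposition $(v)$ there, $\mathcal{A}\cong\bigoplus_{c_0}\mathcal{L}(H_\alpha)$ for a family $\{H_\alpha\}$ of Hilbert spaces.

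The remaining, and decisive, point is that each $H_\alpha$ must be finite dimensional; once this is known, every summand $\mathcal{L}(H_\alpha)$ is finite dimensional and hence trivially has $(\ddag)$, and the $c_0$-sum stability theorem gives $(\ddag)$ for $\mathcal{A}$. If some $H_\alpha$ were infinite dimensional, then $\mathcal{A}$ would contain an isometric copy of $\mathcal{K}(\ell_2)$, hence---as in the Example following Corollary~\ref{corhereddag}---a subspace isometric to $\ell_2$, which fails $(\ddag)$, and heredity of $(\ddag)$ would then force a contradiction. I expect this to be the main obstacle, precisely because Corollary~\ref{corhereddag} presupposes that $\mathcal{A}$ already has $(\ddag)$---the very statement being proved---so the reasoning has to be arranged so that the obstruction is already visible among the subalgebras that the hypothesis controls. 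The natural way to do this is first to show that, under the hypothesis, property $(\ddag)$ passes to \emph{every} separable $C^\ast$-subalgebra of $\mathcal{A}$ (each of which, by the argument above, is itself of the form $\bigoplus_{c_0}\mathcal{K}(H_\beta)$ with countably many separable $H_\beta$), and then to rule out an infinite dimensional $H_\beta$ at that level; carrying out this bootstrap cleanly is the heart of the matter, the rest being bookkeeping with results already in hand.
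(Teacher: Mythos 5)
Your reduction to condition $(iii)$ of \cite[Theorem 2]{CPW} is exactly the paper's route, and up to the decomposition $\mathcal{A}\cong\bigoplus_{c_0}\mathcal{K}(H_\alpha)$ your argument is fine. But you have correctly located, and then left open, the decisive step: nothing in the hypothesis forces the $H_\alpha$ to be finite-dimensional, and the ``bootstrap'' you propose (upgrading the hypothesis from commutative to arbitrary separable $C^\ast$-subalgebras) cannot be carried out. The reason is that every separable commutative $C^\ast$-subalgebra of $\mathcal{K}(H)$ is isometrically $\ast$-isomorphic to some $c_0(\Gamma)$: all of its elements are compact normal operators, so by spectral permanence every function in its Gelfand representation $C_0(\Omega)$ has range whose only accumulation point is $0$, and this forces $\Omega$ to be discrete. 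Since $c_0(\Gamma)$ has $(\ddag)$ by the $c_0$-sum stability theorem, the hypothesis of the corollary is completely blind to the dimension of the $H_\alpha$, so no argument from that hypothesis alone can rule out infinite-dimensional summands.

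Indeed this is not merely an incomplete write-up: $\mathcal{A}=\mathcal{K}(\ell_2)$ satisfies the stated hypothesis (all of its separable commutative $C^\ast$-subalgebras are isometric to $c_0(\Gamma)$ with $\Gamma$ countable, hence have $(\ddag)$), yet it fails $(\ddag)$ by the Example following Corollary~\ref{corhereddag}. The paper's own proof is a single sentence asserting that ``the hypothesis here is sufficient to ensure the $c_0$-decomposition of $\mathcal{A}$ into finite-dimensional spaces''; in the proof of Theorem~\ref{thmdagsubdif} the commutative subalgebras deliver only the decomposition into $\mathcal{K}(H_\alpha)$ with arbitrary $H_\alpha$, while finite-dimensionality is obtained from heredity of $(\ddag)$ applied to a copy of $\ell_2$, which is a subspace but not a commutative subalgebra. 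That is precisely the step you flagged, and it is the step that fails. The statement would need a stronger hypothesis --- for instance that every separable $C^\ast$-subalgebra (not only the commutative ones) has $(\ddag)$, which does exclude infinite-dimensional $H_\alpha$ since $\mathcal{K}(H_\alpha)$ then contains a separable $C^\ast$-subalgebra isomorphic to $\mathcal{K}(\ell_2)$. Your instinct that this is ``the heart of the matter'' is right; the honest conclusion is that it cannot be closed as stated.
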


\begin{proof} 
It follows from the proof of Theorem ~\ref{thmdagsubdif} that the hypothesis here is sufficient to ensure the $c_0$-decomposition of ${\mathcal A}$ into finite-dimensional spaces. Thus ${\mathcal A}$ has $(\ddag)$.
\end{proof}

The following result extends part of \cite[Theorem 4]{LM} to the case of $c_0$-sums of spaces with Lim's condition. 

\begin{thm} \label{tlimsum}
	Let $\{X_i\}_{i \in \I}$ be a collection of spaces such that $X_i$ satisfies Lim's condition for each $i \in \I$. Then, $X=\bigoplus_{c_0} X_i$ satisfies Lim's condition.
\end{thm}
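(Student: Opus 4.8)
The plan is to work in the dual, using the standard identification $X^\ast = \bigoplus_{\ell_1} X_i^\ast$, and to reduce matters to Lim's condition in the individual summands $X_i$ by truncating $\phi$ to finitely many coordinates. Write $\psi = (\psi^i)_{i \in \I}$ for the coordinates of $\psi \in X^\ast$. Let $\{\phi_\alpha\}$ be a bounded net in $X^\ast$ with $\|\phi_\alpha\| = s$ for all $\alpha$ and $\phi_\alpha \xrightarrow{w^\ast} 0$, and fix $\phi \in X^\ast$. Since each coordinate embedding $X_i \hookrightarrow X$ is isometric, the coordinate projection $X^\ast \to X_i^\ast$ is a $w^\ast$-to-$w^\ast$ continuous contraction, so $\phi_\alpha^i \xrightarrow{w^\ast} 0$ in $X_i^\ast$ and $\|\phi_\alpha^i\| \le s$ for each $i$. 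The triangle inequality gives
\[
\|\phi_\alpha + \phi\| = \sum_{i} \|\phi_\alpha^i + \phi^i\| \le \sum_i \bigl(\|\phi_\alpha^i\| + \|\phi^i\|\bigr) = s + \|\phi\|,
\]
so $\limsup_\alpha \|\phi_\alpha + \phi\| \le s + \|\phi\|$, and it remains to prove $\liminf_\alpha \|\phi_\alpha + \phi\| \ge s + \|\phi\|$.

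For the lower bound I would first record the routine fact that Lim's condition in a space $Z$ may be applied to any bounded net $\{\psi_\alpha\}$ with $\psi_\alpha \xrightarrow{w^\ast} 0$ whose norms merely \emph{converge} to some $s' \ge 0$: for $s' = 0$ this is trivial, and for $s' > 0$ one passes to $\widetilde\psi_\alpha := s'\psi_\alpha/\|\psi_\alpha\|$, which has constant norm $s'$ and still $w^\ast$-converges to $0$, and notes $\bigl|\,\|\psi_\alpha + \psi\| - \|\widetilde\psi_\alpha + \psi\|\,\bigr| \le \|\psi_\alpha - \widetilde\psi_\alpha\| = \bigl|\,\|\psi_\alpha\| - s'\,\bigr| \to 0$. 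Now fix $\delta > 0$ and choose a finite $F \subset \I$ with $\sum_{i \notin F} \|\phi^i\| < \delta$; this is possible since $\sum_i \|\phi^i\| = \|\phi\| < \infty$, irrespective of the cardinality of $\I$. Passing to a subnet, I may assume $\|\phi_\alpha^i\| \to s_i$ for every $i \in F$, so that $\sum_{i \in F} s_i + \lim_\alpha \sum_{i \notin F}\|\phi_\alpha^i\| = s$. By Lim's condition in each $X_i$, $i \in F$ (in the convergent-norm form, applied to $\{\phi_\alpha^i\}$ and $\phi^i$), $\|\phi_\alpha^i + \phi^i\| \to s_i + \|\phi^i\|$; and for $i \notin F$, $\|\phi_\alpha^i + \phi^i\| \ge \|\phi_\alpha^i\| - \|\phi^i\|$. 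Summing the finitely many limits over $F$ and the tail estimate, and using $\sum_{i \notin F}\|\phi^i\| < \delta$,
\[
\liminf_\alpha \|\phi_\alpha + \phi\| \ge \sum_{i \in F}\bigl(s_i + \|\phi^i\|\bigr) + \Bigl(s - \sum_{i \in F}s_i\Bigr) - \delta = s + \sum_{i \in F}\|\phi^i\| - \delta \ge s + \|\phi\| - 2\delta.
\]

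To convert this into a statement about the original net I would argue by contradiction: if $\liminf_\alpha \|\phi_\alpha + \phi\| < s + \|\phi\|$, pick $\eta > 0$ with $\liminf_\alpha \|\phi_\alpha + \phi\| < s + \|\phi\| - \eta$, whence $\{\alpha : \|\phi_\alpha + \phi\| < s + \|\phi\| - \eta\}$ is cofinal and yields a subnet with $\|\phi_\alpha + \phi\| < s + \|\phi\| - \eta$ at every index. Running the previous paragraph on this subnet with $\delta < \eta/2$ produces a further subnet with $\liminf \|\phi_\alpha + \phi\| \ge s + \|\phi\| - 2\delta > s + \|\phi\| - \eta$, contradicting that all its terms are $< s + \|\phi\| - \eta$. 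Hence $\liminf_\alpha \|\phi_\alpha + \phi\| \ge s + \|\phi\|$, and with the upper bound, $\lim_\alpha \|\phi_\alpha + \phi\| = s + \|\phi\|$, which is Lim's condition for $X$.

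The main obstacle is bookkeeping rather than conceptual: the mass $\|\phi_\alpha^i\|$ can migrate through infinitely many coordinates as $\alpha$ varies, so the finite truncation $F$ controls only the tail of $\phi$ and not that of $\phi_\alpha$; and since Lim's condition in $X_i$ is formulated for nets of constant (or at least convergent) norm, one is pushed through the subnet extractions and the $\liminf$/contradiction argument above to pass from coordinatewise information back to the whole net. The argument runs parallel to that of Proposition~\ref{tstab} for $\ell_\infty$-sums, but the infinitude of summands in a $c_0$-sum is precisely what forces the truncation step.
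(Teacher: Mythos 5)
Your proof is correct, and it differs from the paper's in the one place that matters. Both arguments work in $X^\ast=\bigoplus_{\ell_1}X_i^\ast$, get the upper bound from the triangle inequality, and truncate to a finite set $F$ of coordinates carrying all but $\delta$ of $\|\phi\|$; the divergence is in how the head coordinates of $\phi_\alpha$ are treated. The paper's proof disposes of them by asserting that, since $\phi_\alpha\xrightarrow{w^\ast}0$, eventually $\sum_{i\in\F_0}\|\phi_\alpha(i)\|<\e$; this follows from weak$^\ast$-convergence only when a weak$^\ast$-null coordinate net must be norm-null (e.g.\ when the $X_i$ are finite-dimensional, which is the case in the paper's application to $\bigoplus_{c_0}\mathcal L(H_\alpha)$, and in the $c_0(\Gamma)$ setting of \cite[Theorem 4]{LM} that the paper is following). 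For general summands it fails --- already for $\I=\{1\}$, $X_1=c_0$, $\phi_\alpha=e_\alpha$ --- and, tellingly, the paper's argument never invokes Lim's condition on the $X_i$ at all. You instead handle the coordinates in $F$ by applying Lim's condition \emph{inside each} $X_i$ (after passing to subnets so that $\|\phi_\alpha^i\|\to s_i$, using your convergent-norm reformulation), which is exactly where the hypothesis must enter, and your subnet/$\liminf$ bookkeeping to return to the original net is sound. The payoff is that your argument proves the theorem as stated, for arbitrary families of spaces with Lim's condition, whereas the paper's proof as written only covers the finite-dimensional-summand case it actually needs.
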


\begin{proof}
	The argument follows the lines of \cite[Theorem 4]{LM}. We have, $X^*=\bigoplus_{\ell_1} X_i^*$. 
      
	Let each $X_i$ satisfy Lim's condition. Let $0 \leq s \leq 1$,  $\{\phi_\alpha\}$ be a net in $X^*$ such that $\phi_\alpha \xrightarrow{w^\ast} 0$ and $\|\phi_\alpha\|=s$ for all $\alpha$. Let $\phi \in X^*$.
    
    Clearly, $\|\phi_\alpha - \phi\| \leq s + \|\phi\|$.
	
	Let $\e >0$ be given. For any finite subset $\F$ of $\I$, $\|\phi_\alpha-\phi\| \geq \|\phi_\alpha\|-\|\phi\| + 2 \sum_{i \in \F}(|\phi(i)|-|\phi_\alpha(i)|)$. 
    
    Let $\F_0$ be a finite subset of $\I$ such that $\sum_{i \in \F_0} |\phi(i)| >\|\phi\| - \e$. Since, $\phi_{\alpha} \xrightarrow{w^\ast} 0$, so there exists $\alpha_0$ such that for $\alpha \geq \alpha_0$, $\sum_{i \in \F_0} |\phi_\alpha (i)| <\e$. Thus, for $\alpha \geq \alpha_0$,
	\[
	\|\phi_\alpha - \phi\| \geq s - \|\phi\| + 2 \|\phi\| - 2\e - 2\e = s + \|\phi\| - 4\e.
	\]
	
	So, $\lim_\alpha \|\phi_\alpha - \phi\| = s + \|\phi\|$.
\end{proof}

This leads us to an interesting equivalence between property $(\ddag)$ and Lim's condition in $C^*$-algebras.
\begin{cor}
 If $\cA$ is a $C^*$-algebra with property $(\ddagger)$, then $\cA$ satisfies Lim's condition.   
\end{cor}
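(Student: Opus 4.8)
The plan is to combine the structure theorem for $C^\ast$-algebras with property $(\ddag)$ with the stability of Lim's condition under $c_0$-sums. First I would invoke Theorem~\ref{thmdagsubdif}: since $\cA$ has property $(\ddag)$, it is isometrically $*$-isomorphic to $\bigoplus_{c_0}\mathcal{L}(H_\alpha)$ for a family $\{H_\alpha\}_{\alpha\in J}$ of finite-dimensional Hilbert spaces. Lim's condition is a statement purely about norms and the $w^\ast$-topology of the dual unit ball, both of which are transported by the adjoint of an isometric isomorphism, so it is preserved under isometric $*$-isomorphism. Hence it suffices to show that $\bigoplus_{c_0}\mathcal{L}(H_\alpha)$ satisfies Lim's condition.

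Next I would observe that each summand $\mathcal{L}(H_\alpha)$ is finite-dimensional, and that any finite-dimensional Banach space $X$ trivially satisfies Lim's condition. Indeed, in a finite-dimensional space the $w^\ast$-topology on $X^\ast$ coincides with the norm topology, so a bounded net $\{\phi_\beta\}$ in $X^\ast$ with $\phi_\beta\xrightarrow{w^\ast}0$ converges to $0$ in norm; therefore $s=\lim_\beta\|\phi_\beta\|=0$, and for every $\phi\in X^\ast$ we get $\lim_\beta\|\phi_\beta+\phi\|=\|\phi\|=s+\|\phi\|$. Thus each $\mathcal{L}(H_\alpha)$ satisfies Lim's condition.

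Finally I would apply Theorem~\ref{tlimsum} to the family $\{\mathcal{L}(H_\alpha)\}_{\alpha\in J}$, which yields that $\bigoplus_{c_0}\mathcal{L}(H_\alpha)$ satisfies Lim's condition, and hence so does $\cA$. I do not expect a genuine obstacle here: the entire content of the corollary is packaged into the two cited theorems, and the only step requiring a word of care is the observation that finite-dimensionality forces $s=0$ in the definition of Lim's condition, after which the verification is immediate.
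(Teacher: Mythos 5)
Your proposal is correct and follows exactly the paper's own route: apply Theorem~\ref{thmdagsubdif} to obtain the $c_0$-decomposition into finite-dimensional operator spaces, note that finite-dimensional spaces satisfy Lim's condition, and conclude via Theorem~\ref{tlimsum}. The only difference is that you spell out the (correct) elementary verification that finite-dimensionality forces $s=0$, which the paper leaves implicit.
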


\begin{proof}
    Let $\cA$ satisfy property $(\ddagger)$. Then, by Theorem ~\ref{thmdagsubdif}, $\cA=\bigoplus_{c_0}\mathcal{L}(H_\alpha)$, where $H_\alpha$ is a finite-dimensional Hilbert space for each $\alpha$. 
    
    But, every finite-dimensional space satisfies Lim's condition. So, $\mathcal{L}(H_\alpha)$ satisfies Lim's condition for each $\alpha$. By the above theorem, $\cA$ satisfies Lim's condition.
\end{proof}

We recall that $Y \subset X$ is said to be a proximinal subspace if for all $x \in X$, there is a $y \in Y$ such that  $d(x,Y)= \|x-y\|$. In particular, if $P$ is a contractive projection on $X$, then $ker(P)$ is a proximinal subspace as $d(x,ker(P))=\|P(x)\|=\|x-(x-P(x))\|$. Also, if $Y\subset X$ is a subspace of finite codimension and proximinal, by Gargavi's theorem, $Y$ is a finite intersection of kernels of norm-attaining functionals. (See \cite{Sin} and \cite[Lemma 2.1]{NR}).

We do not know if Lim's condition is hereditary in general. But under some assumptions, we obtain the hereditary nature of Lim's condition.

\begin{cor}
Let $X=\bigoplus_{c_0} X_i$, where $\{X_i\}$ is a countable collection of finite-dimensional spaces. Let $Y$ be a finite co-dimensional proximinal subspace of $X$. Then, $Y$ satisfies Lim's condition.
\end{cor}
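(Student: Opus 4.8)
The plan is to show that, up to isometry, $Y$ is again a $c_0$-direct sum of countably many finite-dimensional spaces, and then to invoke Theorem~\ref{tlimsum} together with the (already used) fact that every finite-dimensional space satisfies Lim's condition.

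Since $Y$ is a proximinal subspace of finite codimension, Garkavi's theorem recalled just before the statement gives $Y=\bigcap_{j=1}^{n}\ker f_j$ for finitely many norm-attaining functionals $f_1,\dots,f_n\in X^{\ast}$. Using $X^{\ast}=\bigoplus_{\ell_1}X_i^{\ast}$ and writing $f_j=(f_j(i))_i$, the first key point is that each $f_j$ has \emph{finite support}: if $f_j$ attains its norm at $x\in S(X)$, then $\|f_j\|=\sum_i f_j(i)(x(i))\le\sum_i\|f_j(i)\|\,\|x(i)\|\le\|f_j\|$, which forces $\|x(i)\|=1$ whenever $f_j(i)\neq 0$; as $\{\|x(i)\|\}$ vanishes at infinity, only finitely many such $i$ occur. (This is exactly the observation used in the proof of the $c_0$-sum theorem for property $(\ddag)$.) Let $F\subset I$ be the finite union of the supports of $f_1,\dots,f_n$, and split $X=X_F\oplus_{\ell_\infty}X_{F^c}$ with $X_F=\bigoplus_{\ell_\infty,\,i\in F}X_i$ (finite-dimensional) and $X_{F^c}=\bigoplus_{c_0,\,i\notin F}X_i$. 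Since each $f_j$ depends only on the coordinates in $F$, we obtain $Y=Y_F\oplus_{\ell_\infty}X_{F^c}$, where $Y_F:=\bigcap_{j=1}^{n}\ker\!\big(f_j|_{X_F}\big)$ is a subspace of the finite-dimensional space $X_F$. Enumerating $F^c=\{i_1,i_2,\dots\}$, the space $Y_F\oplus_{\ell_\infty}X_{F^c}$ is isometric to $\bigoplus_{c_0}\{Z_k\}_{k\ge 0}$ with $Z_0=Y_F$ and $Z_k=X_{i_k}$ for $k\ge 1$: adjoining one fixed finite-dimensional coordinate to a $c_0$-tail is again a $c_0$-sum. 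As each $Z_k$ is finite-dimensional, hence satisfies Lim's condition, Theorem~\ref{tlimsum} shows that $Y$ satisfies Lim's condition. (If the family $\{X_i\}$ is finite, then $X$, and so $Y$, is finite-dimensional and there is nothing to prove.)

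The argument is largely bookkeeping, and the step carrying the real content is the finite-support property of norm-attaining functionals on $\bigoplus_{c_0}X_i$ --- this is where proximinality is genuinely used, via Garkavi's theorem, and without it there is no reason for $Y$ to split off a $c_0$-tail; indeed, Lim's condition is not known to be hereditary in general. The only additional care needed is to confirm that an $\ell_\infty$-sum of a single finite-dimensional block with a $c_0$-tail really is a $c_0$-sum, so that Theorem~\ref{tlimsum} applies verbatim.
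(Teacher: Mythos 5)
Your proof is correct and follows essentially the same route as the paper: both rest on splitting $Y$ as an $\ell_\infty$-sum of a finite-dimensional block with a $c_0$-tail of the $X_i$'s, and then applying Theorem~\ref{tlimsum}. The only difference is that the paper obtains this decomposition by citing the proof of \cite[Proposition 3.10]{NR}, whereas you derive it directly from Garkavi's theorem and the finite-support property of norm-attaining functionals on $\bigoplus_{c_0}X_i$; your further step of absorbing the finite-dimensional block $Y_F$ as an extra coordinate of the $c_0$-sum lets Theorem~\ref{tlimsum} apply in one stroke, where the paper treats $Y_1$ and $Y_2$ separately and implicitly uses stability of Lim's condition under finite $\ell_\infty$-sums.
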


\begin{proof}
From the proof of \cite[Proposition 3.10 ((ii) $\implies$ (i))]{NR}, $Y=Y_1 \bigoplus_{\infty}Y_2$, where $Y_1$ is finite-dimensional and $Y_2=\sum_{i \in \N \setminus I}\bigoplus_{c_0}X_i$, where $I$ is a finite set. By Theorem ~\ref{tlimsum}, $Y_2$ satisfies Lim's condition. $Y_1$ being finite-dimensional also satisfies Lim's condition. Hence, $Y$ satisfies Lim's condition.
\end{proof}

The following corollary shows that Lim's condition is hereditary for $C^*$-algebras.

\begin{cor} Let $\mathcal{A}$ be a $C^\ast$-algebra. If $\mathcal{A}$ has property $(\ddag)$, then it satisfies Lim's condition and hence, it has $w^*$-normal structure as well as fixed point property for $w^*$-compact convex sets in ${\mathcal A}^\ast$.
\end{cor}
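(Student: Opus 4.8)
The plan is to assemble the pieces already in place rather than argue from scratch. First I would invoke the preceding corollary: since $\mathcal{A}$ is a $C^\ast$-algebra with property $(\ddagger)$, Theorem~\ref{thmdagsubdif} gives an isometric $\ast$-isomorphism $\mathcal{A} \cong \bigoplus_{c_0}\mathcal{L}(H_\alpha)$ with each $H_\alpha$ finite-dimensional, and then Theorem~\ref{tlimsum}, together with the elementary fact that finite-dimensional spaces satisfy Lim's condition, yields that $\mathcal{A}$ satisfies Lim's condition. So the first half of the statement is immediate.

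For the second half I would quote Lim's two theorems as recalled in the introduction. By \cite[Theorem 2]{Lim}, Lim's condition on $\mathcal{A}$ implies $w^*$-normal structure on $\mathcal{A}^\ast$: for every $w^*$-compact convex set $K \subset \mathcal{A}^\ast$ and every nontrivial convex $H \subset K$, there is $x_0 \in H$ with $\sup\{\|x_0 - y\| : y \in H\} < \sup\{\|x-y\| : x,y \in H\}$. Then, by \cite[Theorem 1]{Lim}, $w^*$-normal structure forces every non-expansive map $T : K \to K$ on a $w^*$-compact convex $K \subset \mathcal{A}^\ast$ to have a fixed point. Chaining these implications gives exactly the asserted conclusion.

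There is no real obstacle here; the content lies entirely in the earlier results, and this corollary is a synthesis of them. The only point worth a word of care is bookkeeping: Lim's condition is a property of $\mathcal{A}$, whereas the normal-structure and fixed-point conclusions concern $w^*$-compact convex subsets of the dual $\mathcal{A}^\ast$, so one should be explicit that it is the geometry of the dual ball, accessed through $w^*$-convergence of bounded nets in $\mathcal{A}^\ast$, that feeds into Lim's theorems. If a fully self-contained argument were desired, one could alternatively re-derive $w^*$-normal structure directly from the $c_0$-decomposition into finite-dimensional operator spaces, but invoking \cite{Lim} is cleaner and already justified by the discussion in the introduction.
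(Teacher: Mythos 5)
Your proposal is correct and follows essentially the same route as the paper: decompose $\mathcal{A}$ via Theorem~\ref{thmdagsubdif}, apply Theorem~\ref{tlimsum} (with the fact that finite-dimensional spaces satisfy Lim's condition) to get Lim's condition, and then invoke Lim's theorems for $w^*$-normal structure and the fixed point property. The only cosmetic difference is that the paper cites \cite[Lemma 4]{LM} for the passage from Lim's condition to $w^*$-normal structure where you cite \cite[Theorem 2]{Lim}, which is the same fact as recalled in the introduction.
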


\begin{proof} 
By theorem~\ref{thmdagsubdif}, we have that $\mathcal{A}=\bigoplus_{c_0}\mathcal{L}(H_\alpha)$, where $\mathcal{L}(H_\alpha)$ is the space of bounded operators on a finite-dimensional Hilbert space $H_\alpha$. Hence, the conclusion follows from Theorem ~\ref{tlimsum}. We also get from \cite[Lemma 4]{LM} that ${\mathcal A}$ has the $w^*$-normal structure. Therefore, by \cite[Theorem 1]{Lim}, any non-expansive map $T: K \rightarrow K$ has a fixed point for every $w^*$-compact convex set $K$ in ${\mathcal A}^\ast$.
\end{proof}	

\begin{rem}
	Let $X$ satisfy Lim's condition and $Y$ be a closed subspace of $X$. Then $X/Y$ has Lim's condition. To see this, we recall that $(X/Y)^*$ is canonically identified with $Y^\perp$. Let $0 \leq s \leq 1$ and $\{\phi_\alpha\}$ be a net in $Y^\perp$ with $\|\phi_\alpha\|=s$ and $\phi_\alpha \xrightarrow{w^\ast} 0$. Let $\phi \in Y^\perp$. Since, $X$ has Lim's condition, $\lim_\alpha \|\phi_\alpha + \phi\|=s+\|\phi\|$.
\end{rem}

For preserving  property $(\ddag)$ on the quotient spaces, we need some additional hypothesis of proximinality.

\begin{prop}
	Suppose $X$ has $(\ddag)$ and $Y\subset X$ is a proximinal subspace. Then $X/Y$ has $(\ddag)$.	
\end{prop}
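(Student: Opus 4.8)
The plan is to transport the entire hypothesis from $X/Y$ to $X$ through the canonical isometric identification $(X/Y)^\ast = Y^\perp \subseteq X^\ast$, and then to apply property $(\ddag)$ of $X$. Let $q\colon X \to X/Y$ denote the quotient map. Three elementary facts about this identification do most of the work: first, for $\psi \in Y^\perp$ one has $\|\psi\|_{X^\ast} = \|\psi\|_{(X/Y)^\ast}$, so a net lying in $B((X/Y)^\ast)$ with pairwise distances $\geq \e$ becomes, verbatim, a net in $B(X^\ast)$ with the same bound and the same separation; second, since $q$ is onto, convergence $\phi_\alpha \xrightarrow{w^\ast}\phi$ in $(X/Y)^\ast$ is the same as $\phi_\alpha \xrightarrow{w^\ast}\phi$ in $X^\ast$ once the functionals are regarded as acting on $X$ via $\psi \mapsto \psi\circ q$ (the test elements $q(x)$, $x \in X$, exhaust $X/Y$).

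The only point at which the proximinality hypothesis enters is the preservation of norm-attainment under this identification, and this is the crux of the argument. Suppose $\psi \in Y^\perp$ attains its norm as a functional on $X/Y$, say $\psi(\xi) = \|\psi\|$ for some $\xi \in S(X/Y)$. Write $\xi = x_0 + Y$; by proximinality of $Y$ there is $y_0 \in Y$ with $\|x_0 - y_0\| = d(x_0,Y) = \|\xi\| = 1$. Then $x := x_0 - y_0 \in S(X)$ and $q(x) = \xi$, so $(\psi\circ q)(x) = \psi(\xi) = \|\psi\| = \|\psi\|_{X^\ast}$; hence $\psi$, viewed in $X^\ast$, is norm-attaining on $X$. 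This is precisely the step that fails for a general closed subspace, since without proximinality one only gets $q(B(X))$ dense in $B(X/Y)$ rather than equal to it, and then norm-attainment need not transfer.

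To conclude, take $0 < \e < 2$, a net $\{\phi_\alpha\}$ of norm-attaining functionals in $B((X/Y)^\ast)$ with $\|\phi_\alpha - \phi_\beta\| \geq \e$ for $\alpha \neq \beta$, and a norm-attaining $\phi \in B((X/Y)^\ast)$ with $\phi_\alpha \xrightarrow{w^\ast}\phi$. Regard all of these as elements of $B(X^\ast)$ through $Y^\perp \hookrightarrow X^\ast$. By the second paragraph each $\phi_\alpha$ and $\phi$ is norm-attaining on $X$, and by the first paragraph they still lie in $B(X^\ast)$, remain $\e$-separated, and still satisfy $\phi_\alpha \xrightarrow{w^\ast}\phi$ in $X^\ast$. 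Property $(\ddag)$ of $X$ then gives $\|\phi\|_{X^\ast} \leq 1 - \e/2$, and the isometry of the identification gives $\|\phi\|_{(X/Y)^\ast} \leq 1 - \e/2$, which is exactly property $(\ddag)$ for $X/Y$. Apart from the norm-attainment transfer, every step is a routine use of standard quotient-space facts, so I expect that single step — and the role of proximinality in it — to be the only substantive point.
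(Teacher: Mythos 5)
Your proof is correct and takes essentially the same route as the paper's: identify $(X/Y)^\ast$ isometrically with $Y^\perp \subseteq X^\ast$, use proximinality of $Y$ to lift a norming coset to a norming element of $S(X)$ so that norm-attainment transfers, and then apply property $(\ddag)$ of $X$ to the transported net. Your write-up is, if anything, slightly more explicit than the paper's about why weak$^\ast$-convergence and the $\e$-separation survive the identification.
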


\begin{proof}
	We have that $Y^\perp \cong (X/Y)^*$ via the canonical isometry. Let $0 <\e <2$, $\{\phi_\alpha\}$ be a net of norm-attaining functionals in $B((X/Y)^*)$ such that $\|\phi_\alpha - \phi_\beta\| \geq \e$ and $\phi \in B((X/Y)^*)$ be a norm-attaining functional with $\phi_\alpha \xrightarrow{w^\ast} \phi$. We can identify $\{\phi_\alpha\}$ and $\phi$ as elements in $Y^\perp$.
	
	Now, $\phi$ attains norm on $B(X/Y)$. So, there exists $x_0 \in X$ such that $\phi(x_0)=\|\phi\|=d(x_0, Y)$. But, $Y$ is proximinal, thus, there is $y_0$ in $Y$ such that $\phi(x_0)=\phi(x_0-y_0)=\|\phi\|=\|x_0-y_0\|$. So, $\phi$ is norm-attaining as an element of $Y^\perp$. Similarly, for each $\alpha$, $\phi_\alpha$ is norm-attaining as an element of $Y^\perp$.
	
	Now, $(\ddag)$ is hereditary. So, $Y^\perp$ has $(\ddag)$. Thus, $(X/Y)$ has $(\ddag)$.
\end{proof}

An interesting observation relates reflexivity and Lim's condition in a dual space. If $X^\ast$ has Lim's condition, then weak$^\ast$ and weak topologies coincide on $S(X^{\ast\ast})$. By Goldstein's theorem (see \cite{Ho}), under the canonical embedding, $B(X)$ is $w^*$-dense in $B(X^{\ast\ast})$. So, we see that Lim's condition in a dual space implies reflexivity. We do not know in general if $X^\ast$ has  $(\ddag)$, then $X$ is reflexive. We have a partial result when $X$ has no isomorphic copy of $\ell_1$ ($\ell_1 \not\hookrightarrow X$).
  \begin{thm}
  	If $X$ is such that $\ell_1 \not\hookrightarrow X$ and $X^*$ has property $(\ddag)$, then $X$ is reflexive.
  \end{thm}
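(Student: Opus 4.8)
The plan is to argue by contradiction. Suppose $X$ is not reflexive; I will produce a sequence of norm-attaining functionals in $B(X^*{}^*)$ that is uniformly separated in norm and converges weak$^*$ to a norm-attaining functional of norm one, which directly contradicts property $(\ddag)$ of $X^*$ (applied with $X^*$ playing the role of the space ``$X$'' in the definition, so that $(X^*)^* = X^{**}$ is the relevant dual).

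First I would invoke James' theorem: since $X$ is not reflexive, there is $\phi_0 \in S(X^*)$ that does not attain its norm on $B(X)$, so $\sup\{|\phi_0(x)|:x\in B(X)\}=1$ is not attained. Rotating the chosen vectors, pick $x_n \in B(X)$ with $\phi_0(x_n)$ real and $\phi_0(x_n) > 1 - 1/n$. Since $\ell_1 \not\hookrightarrow X$, Rosenthal's $\ell_1$-theorem lets me pass to a weakly Cauchy subsequence $(x_{n_k})$ (no subsequence can be equivalent to the $\ell_1$-basis, as its closed span would be a copy of $\ell_1$ in $X$). A weakly Cauchy sequence in $X$ converges weak$^*$ in $X^{**}$ to the functional $G$ defined by $G(\psi)=\lim_k \psi(x_{n_k})$ for $\psi\in X^*$, with $\|G\| \le \liminf_k \|x_{n_k}\| \le 1$. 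From $\phi_0(x_{n_k}) \to 1$ we get $G(\phi_0)=1$, so $\|G\|=1$ and $G$ attains its norm at $\phi_0$; moreover $G \notin X$, since if $G$ were the image of some $x \in X$ then $\|x\|=1$ and $\phi_0(x)=1$, i.e. $\phi_0$ would attain its norm at $x \in B(X)$, contradicting the choice of $\phi_0$.

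Next I would extract a norm-separated subsequence. Because $G \notin X$, the sequence $(x_{n_k})$ has no norm-Cauchy subsequence: a norm limit would lie in $X$ and would have to equal $G$. A bounded sequence with no norm-Cauchy subsequence is not totally bounded, so a routine greedy selection produces an $\e_0$-separated subsequence $(x_{m_j})$ for some $\e_0>0$, and this subsequence still converges weak$^*$ to $G$. Each $x_{m_j}$, viewed in $X^{**}$, lies in $B(X^{**})$ and attains its norm on $B(X^*)$ by Hahn--Banach, while $\|x_{m_i}-x_{m_j}\| \ge \e_0$ for $i\ne j$. Setting $\e=\min(\e_0,1)\in(0,2)$, property $(\ddag)$ of $X^*$ forces $\|G\| \le 1-\e/2 < 1$, contradicting $\|G\|=1$. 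Hence $X$ is reflexive.

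The step I expect to be the crux is arranging for the weak$^*$-limit $G$ to be \emph{simultaneously} norm-attaining and of norm exactly one while being a weak$^*$-limit of a sequence from $B(X)$: a bare appeal to Goldstine, or to the Odell--Rosenthal theorem, would give weak$^*$-density of $B(X)$ in $B(X^{**})$ but no control on the norms of the approximating vectors, and then the inequality coming from $(\ddag)$ would be vacuous. Choosing the $x_n$ inside the slices of $B(X)$ cut out by a James functional $\phi_0$ is precisely what pins down both properties of $G$. The remaining ingredients --- passage to a weakly Cauchy subsequence, weak$^*$-convergence of weakly Cauchy sequences in $X^{**}$, and extraction of a norm-separated subsequence --- are standard.
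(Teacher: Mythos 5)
Your proof is correct, but it takes a genuinely different route from the paper's. The paper localizes: it takes an arbitrary separable subspace $Y\subseteq X$, invokes the Odell--Rosenthal theorem to write each element of $S(Y^{**})$ as a $w^*$-sequential limit of elements of $S(Y)$, shows (by the same separated-subsequence contradiction you use) that every \emph{norm-attaining} element of $S(Y^{**})$ must already lie in $Y$, and then concludes $Y=Y^{**}$ from the Bishop--Phelps density of norm-attaining functionals, finishing with the separable determination of reflexivity. To run this argument the paper must first transfer $(\ddag)$ from $X^*$ to $Y^*$, which it does via $Y^*\cong X^*/Y^\perp$ and its earlier proposition that quotients by proximinal subspaces inherit $(\ddag)$. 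You instead argue globally and by contradiction: James' theorem hands you a single non-norm-attaining $\phi_0\in S(X^*)$, Rosenthal's $\ell_1$-theorem produces a weakly Cauchy maximizing sequence whose $w^*$-limit $G\in S(X^{**})$ is norm-attaining (at $\phi_0$) yet lies outside $X$, and the separated subsequence then violates $(\ddag)$ applied directly to $X^*$. Your route is shorter and bypasses both the quotient/proximinality machinery and the separable reduction, at the cost of invoking the (deep) James theorem where the paper gets by with Bishop--Phelps; conversely, the paper's use of Odell--Rosenthal is itself built on Rosenthal's $\ell_1$-theorem, so the hypotheses are exploited in essentially the same way, and the final contradiction mechanism --- an $\e$-separated net of norm-attaining elements of $B(X^{**})$ converging weak$^*$ to a norm-attaining element of norm one --- is identical in both arguments.
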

  \begin{proof}
  	Let $Y$ be a separable subspace of $X$ such that $\ell_1 \not\hookrightarrow Y$.  By a result in \cite[Pg. 215]{Di}, every element of $S(Y^{**})$ is a $w^*$-sequential limit of elements of $S(Y)$. Let $\phi \in S(Y^{**})$ be a norm-attaining functional and $\{y_n\}$ be a sequence in $S(Y)$ such that $y_n \xrightarrow{w^\ast} \phi$. If $\{y_n\}$ does not converge to $\phi$ in the norm, then  $\{y_n\}$ is not Cauchy. Then there is a subsequence, again denoted by $\{y_n\}$ such that for some $\e >0$, this subsequence is $\e$-separated, i.e.,  $\|y_n-y_m\| \geq \e$ for all $n \neq m$. Clearly $y_n$'s are norm-attaining on $Y^\ast$ and $y_n \rightarrow \phi$ in the $w^*$-topology.
  	
  	Now, $Y^\perp \subset X^*$ is a $w^*$-closed and hence, a proximinal subspace. Thus $Y^*$ (being isometrically isomorphic to $X^*/Y^\perp$) has property $(\ddag)$.
  	So, $\|\phi\| \leq 1-\e/2$, a contradiction. Thus, we get that $\phi \in Y$. Now by the Bishop-Phelps theorem, we conclude that $Y$ is reflexive and hence, as reflexivity is a separably determined property, $X$ is a reflexive space.

  \end{proof}

\section{Property $(\ddag)$ on function spaces}
For $\Omega$ locally compact, we denote by $C_0(\Omega)$ the space of functions vanishing at infinity. From the proof of \cite[Theorem 4]{LM}, we get that if $\Omega$ is a discrete set then $c_0(\Omega)$ satisfies Lim's condition. For $ \omega \in \Omega$, let $\delta(\omega)$ denote the Dirac measure at $\omega$, this is a norm-attaining functional. Note that for $ \omega_1\neq \omega_2$, $\|\delta(\omega_1)-\delta(\omega_2)\|=2$. Thus, using \cite[Theorem 4]{LM} it is easy to see that if $C_0(\Omega)$ satisfies Lim's condition or $(\ddag)$, then $\Omega$ is a discrete set.

In this section, we prove that for a uniform algebra, property $(\ddag)$ implies that the space is finite-dimensional. 

The following lemmas are crucial for the upcoming discussion.

\begin{lem} \label{lemsubalg}
Let $X \subset C(\Omega)$ be a closed subspace and suppose ${\bf 1} \in X$. Then $\overline{\partial_e B(X^\ast)}^{w^*} \subset S(X^\ast)$. All the functionals in $\overline{\partial_e B(X^\ast)}^{w^*} $ are norm-attaining.
\end{lem}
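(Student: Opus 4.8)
The plan is to exploit the fact that $\mathbf{1} \in X$ forces every extreme point of $B(X^*)$ to be a restriction of an evaluation-type functional, and that the $w^*$-limit of such functionals still evaluates the constant function $\mathbf{1}$ to a unimodular scalar. First I would recall the description of $\partial_e B(X^*)$ for a subspace containing constants: by the usual Hahn--Banach/Krein--Milman argument (or Arens--Kelley), every $\phi \in \partial_e B(X^*)$ is of the form $\lambda\,\delta(\omega)|_X$ for some $\omega \in \Omega$ and some scalar $\lambda$ with $|\lambda| = 1$; in particular $\phi(\mathbf{1}) = \lambda$, so $|\phi(\mathbf{1})| = 1$. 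Consequently every $\phi \in \partial_e B(X^*)$ satisfies $\|\phi\| \geq |\phi(\mathbf{1})|/\|\mathbf{1}\| = 1$, hence $\|\phi\| = 1$, and moreover $\phi$ attains its norm at $\mathbf{1} \in S(X)$ (since $\|\mathbf{1}\| = 1$). This already gives $\partial_e B(X^*) \subset S(X^*)$ with every such functional norm-attaining at the single point $\mathbf{1}$.

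Next I would pass to the $w^*$-closure. The key point is that the evaluation map $\psi \mapsto \psi(\mathbf{1})$ is $w^*$-continuous on $X^*$. So if $\psi \in \overline{\partial_e B(X^*)}^{w^*}$, pick a net $\phi_\gamma \in \partial_e B(X^*)$ with $\phi_\gamma \xrightarrow{w^*} \psi$; then $\psi(\mathbf{1}) = \lim_\gamma \phi_\gamma(\mathbf{1})$, and since each $|\phi_\gamma(\mathbf{1})| = 1$ we need a little care because the unit circle is closed, giving $|\psi(\mathbf{1})| = 1$. Therefore $\|\psi\| \geq |\psi(\mathbf{1})| = 1$; combined with $w^*$-lower semicontinuity of the norm (which gives $\|\psi\| \leq \liminf \|\phi_\gamma\| = 1$), we conclude $\|\psi\| = 1$, i.e. $\psi \in S(X^*)$. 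Finally, the equality $\psi(\mathbf{1}) = \|\psi\| \cdot \overline{\psi(\mathbf{1})}/|\psi(\mathbf{1})|$ up to the obvious unimodular rotation shows $\psi$ attains its norm at $\mathbf{1}$ (equivalently, after multiplying $\psi$ by $\overline{\psi(\mathbf{1})}$, it sends $\mathbf{1}$ to $1 = \|\psi\|$), so all functionals in $\overline{\partial_e B(X^*)}^{w^*}$ are norm-attaining.

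The only genuinely delicate point is the description of $\partial_e B(X^*)$ as rotated evaluations — this is classical for function spaces containing constants, but one should either cite it (Arens--Kelley theorem / the representation of extreme points of the dual ball of $C(\Omega)$ restricted to $X$, cf. the monographs cited) or give the short Milman-type argument: extreme points of $B(X^*)$ lie in the $w^*$-closure of $\partial_e B(C(\Omega)^*) = \{\lambda\delta(\omega) : |\lambda|=1\}$ restricted to $X$, and the restriction of such a functional still has $|\,\cdot\,(\mathbf{1})| = 1$, so the argument of the previous paragraph applies uniformly to the whole set $\overline{\partial_e B(X^*)}^{w^*}$ at once — in fact one does not even need the precise extreme-point structure, only that $|\phi(\mathbf{1})| = 1$ for every $\phi$ in a $w^*$-dense subset of the relevant set, which follows since $\|\mathbf{1}\|=1$ and $\partial_e B(X^*) \subseteq B(X^*)$ forces $|\phi(\mathbf{1})| \le 1$ while extremality forces equality. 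I expect the bookkeeping with unimodular scalars in the complex case to be the main (minor) obstacle; everything else is $w^*$-continuity of point evaluation together with $w^*$-lower semicontinuity of the norm.
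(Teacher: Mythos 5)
Your proof is correct and follows essentially the same route as the paper: extend each extreme point of $B(X^*)$ to an extreme point of $B(C(\Omega)^*)$ (a unimodular multiple of a Dirac measure), deduce $|\phi({\bf 1})|=1$, and pass to the $w^*$-closure using $w^*$-continuity of evaluation at ${\bf 1}$ together with closedness of the unit circle, so that every functional in the closure has norm one and attains it at ${\bf 1}$. The paper's proof is just a condensed version of exactly this argument.
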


\begin{proof}
By an application of Krein-Milman theorem, any element $x^*$ of $\partial_e B(X^\ast)$ can be extended to an element of $\partial_e C(\Omega)^\ast$. So, $|x^*({\bf 1})|=1$. Now, $w^*$-limit of such a point also takes absolute value $1$ at the constant function ${\bf 1}$. Thus,  $\overline{\partial_e B(X^\ast)}^{w^*} \subset S(X^\ast)$. All the functionals in the $w^*$-closure attain their norm at ${\bf 1}$.
\end{proof}

The following lemma appears as \cite[Thoerem 2]{IL}. We prove it for the sake of completeness.

\begin{lem} \label{lemonedsum}
	Let $x^* \in \partial_e B(X^*)$ be a one dimensional $L$-summand, that is, $X^*=span\{x^*\} \bigoplus_1 N$ for some closed subspace $N$ of $X^*$. Then, $x^*$ attains its norm.
\end{lem}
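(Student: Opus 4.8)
The plan is to deduce norm‑attainment of $x^*$ from the Bishop–Phelps theorem together with the $\ell_1$‑splitting $X^*=span\{x^*\}\oplus_1 N$ coming from the $L$‑summand hypothesis. First I would record the free normalization: since $x^*\in\partial_e B(X^*)$, necessarily $\|x^*\|=1$, and every $\phi\in X^*$ has a unique decomposition $\phi=\lambda_\phi x^*+n_\phi$ with $n_\phi\in N$ and $\|\phi\|=|\lambda_\phi|+\|n_\phi\|$. The subspace $N$ is closed of codimension one, hence nowhere dense, so by Bishop–Phelps the (norm‑dense) set of norm‑attaining functionals is not contained in $N$; pick a norm‑attaining $\psi\in X^*\setminus N$.

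Now I would normalize $\psi$ along the $x^*$–direction. Writing $\psi=\lambda x^*+n$ with $\lambda\neq 0$, $n\in N$, put $\psi_0:=\psi/\lambda=x^*+n_0$ where $n_0:=n/\lambda\in N$. Norm‑attainment is invariant under multiplication by a nonzero scalar, so $\psi_0$ attains its norm, say $|\psi_0(y)|=\|\psi_0\|$ for some $y\in S(X)$; and by the $\ell_1$‑decomposition $\|\psi_0\|=\|x^*\|+\|n_0\|=1+\|n_0\|$. The conclusion then drops out of the triangle inequality, using $|x^*(y)|\le\|x^*\|\,\|y\|=1$ and $|n_0(y)|\le\|n_0\|$:
\[
1+\|n_0\|=|\psi_0(y)|=|x^*(y)+n_0(y)|\le|x^*(y)|+|n_0(y)|\le 1+\|n_0\|,
\]
so equality holds throughout; in particular $|x^*(y)|=1=\|x^*\|$, i.e. $x^*$ attains its norm at $y$.

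I do not expect a genuine obstacle here: the argument is essentially three lines, and the only point that needs a word of care is the existence of a norm‑attaining functional lying off the hyperplane $N$, which is immediate from density of norm‑attaining functionals plus properness and closedness of $N$. It is worth noting that the extreme‑point hypothesis on $x^*$ is used only to secure $\|x^*\|=1$ (indeed a norm‑one vector spanning an $L$‑summand is automatically extreme in $B(X^*)$, by an argument on the $\ell_1$‑decomposition). An alternative, more structural route would be to observe that $\ker x^*$ is a finite‑codimensional $M$‑ideal and invoke proximinality of $M$‑ideals, but the Bishop–Phelps argument above is shorter and self‑contained.
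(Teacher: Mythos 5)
Your proof is correct and follows essentially the same route as the paper's: both use the Bishop--Phelps theorem to produce a norm-attaining functional with non-zero component along $x^*$ in the $\ell_1$-decomposition $X^*=span\{x^*\}\bigoplus_1 N$, and then the triangle-inequality/equality argument (which the paper leaves as ``easy to see'') forces $x^*$ to attain its norm at the same point. Your write-up simply fills in the details the paper omits, and correctly observes that only the $x^*$-component needs to be non-zero.
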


\begin{proof}
	Since $X^\ast = span\{x^\ast\} \bigoplus_1 N$, we note that if $y^\ast \in S(X^\ast)$ is norm-attaining and $y^\ast = \alpha x^\ast+n^\ast$ for some $n^\ast \in N$, with both the components non-zero, then it is easy to see that $x^\ast$ also attains its norm. Now, using the Bishop-Phelps theorem on the denseness of norm-attaining functionals (\cite[Pg. 169]{Ho}), we get that there is a norm-attaining functional $y^\ast$ for which the components in the decomposition are non-zero. Thus $x^\ast$ attains its norm.
\end{proof}

  \vskip 1em
   We recall that if $X$ is an $L^1$-predual space, then since $X^\ast = L^1(\mu)$, for any $x^\ast \in \partial_e B(X^*)$ we have $X^\ast = span\{x^\ast\} \bigoplus_1 N$. So, by Lemma ~\ref{lemsubalg}, $x^*$ attains norm. This is the motivation for studying $(\ddag)$ in this context.
  \vskip 1em

In the case of a uniform algebra $A$, it follows from the properties of the Choquet boundary of $A$, that for any extreme point $x^*$ of the dual unit ball, $span\{x^*\}$ is an $L$-summand (see Theorem V.4.2 in \cite{HWW} and Lemma II.12.2 in \cite{Ga}). For a compact convex set $K$, let $A(K)$ denote the space of affine continuous functions. For an extreme point $k$ of $K$, that is also a split face, it is known that $span(\delta(k))$ is a one dimensional $L$-summand in $A(K)^*$ (see \cite[Chapter 3, Theorem 8.3]{AE}). This in particular happens when $K$ is a Choquet Simplex. To connect it with the $C^*$ algebra theory, it follows from \cite[Thoerem 1.1]{BR} that the set of tracial state $T(\mathcal{A})$ for a unital $C^*$-algebra $\mathcal{A}$ is a Choquet simplex. These spaces illustrate the significance of the next set of results.

\begin{thm} \label{thmunialg}
Let $X$ be a Banach space such that the set $D= \{x^\ast \in \partial_e B(X^*): span\{x^\ast\}~is~an~L-summand\}$ is $w^*$-dense in $\partial_e B(X^*)$. Suppose $\overline{\partial_e B(X^*)}^{w\ast} \subset S(X^*)$. If $X$ has $(\dag)$, then $X$ is isometric to $\ell^\infty(k)$ for some integer $k>0$. In particular, any uniform algebra satisfying $(\ddag)$ is finite-dimensional.
\end{thm}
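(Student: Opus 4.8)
The plan is to use property $(\dag)$, together with the two hypotheses on the dual ball, to force $\partial_e B(X^*)$ to be a union of only finitely many circles $\{\lambda x_j^*\colon |\lambda|=1\}$, $j=1,\dots,k$, and then to recover $X^*$ from its extreme points via the Krein--Milman theorem. First I would record what the hypotheses give: by Lemma~\ref{lemonedsum} every $x^*\in D$ attains its norm, and since $D$ is $w^*$-dense in $\partial_e B(X^*)$ we have $\overline{D}^{w^*}=\overline{\partial_e B(X^*)}^{w^*}\subseteq S(X^*)$; in particular, every $w^*$-cluster point of a net drawn from $D$ is a unit vector.

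Next I would examine the mutual distances inside $D$. Fix $x^*\in D$, so $X^*=\operatorname{span}\{x^*\}\oplus_1 N$ for a closed subspace $N$; since $\|x^*\|=1$, any extreme point of $B(X^*)$ that is not a unimodular multiple of $x^*$ must, being an extreme point of the ball of an $\ell_1$-direct sum, be supported on one summand, hence lie in $N$, and therefore be at distance exactly $2$ from $x^*$. Consequently, picking one representative from each distinct one-dimensional subspace spanned by elements of $D$ produces a $2$-separated family $\{x_i^*\}_{i\in I}$ of norm-attaining extreme points. If $I$ were infinite, I would pass to a $w^*$-convergent subnet $x_{i_\alpha}^*\xrightarrow{w^*}\phi$ (Banach--Alaoglu); since the points are pairwise at distance $2$, a routine adjustment keeps the subnet $\e$-separated for a fixed $\e\in(0,2)$, and then property $(\dag)$ gives $\|\phi\|\le 1-\e/2<1$, whereas $\phi\in\overline{D}^{w^*}\subseteq S(X^*)$ forces $\|\phi\|=1$ --- a contradiction. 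Hence the elements of $D$ span only finitely many one-dimensional subspaces $\operatorname{span}\{x_1^*\},\dots,\operatorname{span}\{x_k^*\}$, which are linearly independent by the distance computation, and $D\subseteq F:=\bigcup_{j=1}^k\{\lambda x_j^*\colon |\lambda|=1\}$.

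I would then reconstruct the dual. The set $F$ is $w^*$-compact, being a finite union of circles in the finite-dimensional space $\operatorname{span}\{x_1^*,\dots,x_k^*\}$; combining $w^*$-density of $D$ in $\partial_e B(X^*)$ with $D\subseteq F$ gives $\partial_e B(X^*)\subseteq\overline{D}^{w^*}\subseteq F$, and the reverse inclusion is immediate, so $\partial_e B(X^*)=F$. By Krein--Milman, $B(X^*)=\overline{\operatorname{conv}}^{w^*}(F)$, and a short computation identifies $\operatorname{conv}(F)$ with $\{\sum_{j=1}^k c_j x_j^*\colon \sum_j|c_j|\le 1\}$, which is already $w^*$-compact; hence $X^*=\operatorname{span}\{x_1^*,\dots,x_k^*\}$ with that set as its unit ball, i.e.\ $X^*$ is isometric to $\ell^1(k)$. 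Since $X^*$ is finite-dimensional, so is $X$, and $X=X^{**}=(X^*)^*$ is isometric to $(\ell^1(k))^*=\ell^\infty(k)$. For the uniform algebra $A\subseteq C(\Omega)$: since $\mathbf{1}\in A$, Lemma~\ref{lemsubalg} shows $\overline{\partial_e B(A^*)}^{w^*}\subseteq S(A^*)$ and that every such functional attains its norm at $\mathbf{1}$, while the quoted properties of the Choquet boundary give that $\operatorname{span}\{x^*\}$ is an $L$-summand for \emph{every} $x^*\in\partial_e B(A^*)$; thus $D=\partial_e B(A^*)$ and the density hypothesis is automatic. All the functionals occurring in the argument above --- the $x_i^*$ (by Lemma~\ref{lemonedsum}) and the cluster point $\phi$ (by Lemma~\ref{lemsubalg}) --- are norm-attaining, so the weaker property $(\ddag)$ already suffices to run it, and we conclude that $A$ is isometric to $\ell^\infty(k)$ for some $k$, in particular finite-dimensional.

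The step I expect to be the main obstacle is the passage from the $2$-separated family to a contradiction in the second paragraph: one has to set up the $\e$-separated net carefully, and, in the uniform algebra case, check via Lemma~\ref{lemsubalg} that the relevant $w^*$-cluster point is itself norm-attaining, which is precisely what makes the \emph{weaker} hypothesis $(\ddag)$ enough there. Once the extreme points are pinned down, the Krein--Milman reconstruction of $X^*$ as $\ell^1(k)$ is routine.
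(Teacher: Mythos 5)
Your proposal is correct and follows essentially the same route as the paper: a $2$-separated net of extreme points spanning distinct one-dimensional $L$-summands would have a $w^*$-cluster point of norm $1$ by the closure hypothesis but of norm $\le 1-\e/2$ by $(\dag)$, forcing $D$ to be finite modulo unimodular scalars, with Lemmas~\ref{lemsubalg} and~\ref{lemonedsum} supplying the norm-attainment needed to run the argument under the weaker hypothesis $(\ddag)$ for uniform algebras. You merely spell out two steps the paper leaves implicit (the distance-$2$ computation from the $\ell_1$-decomposition and the Krein--Milman reconstruction of $X^*$ as $\ell^1(k)$), which is fine.
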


\begin{proof} 

Let $x^\ast$ be a $w^*$-accumulation point of $D$. Let $\{x^\ast_{\alpha}\} \subset D$ be a net such that $x^\ast_{\alpha} \rightarrow x^\ast$ in the $w^*$ topology. Since any two independent  points in $D$ can not belong to the same one dimensional $L$-summand, we may and do assume that $\|x^\ast_{\alpha}-x^\ast_{\beta}\| =2 $ for $\alpha \neq \beta$. Since $\overline {\partial_e B(X^*)}^{w^*} \subset S(X^\ast)$, we get $\|x^\ast\|=1$. This contradicts property $(\dag)$.

This shows that $D$, after identifying vectors that are multiples of scalars of modulus $1$, is a finite  discrete set. Thus $X$ is isometric to $\ell^\infty(k)$. In the case of a uniform algebra, we note, by Lemma ~\ref{lemsubalg}, that extreme points of the dual unit ball as well as their $w^*$-accumulation points are norm-attaining functionals.	Therefore, any uniform algebra satisfying property $(\ddag)$ is finite-dimensional.
\end{proof}

As a consequence of the above result, we have

\begin{cor}
If $C(\Omega)$ has property $(\ddag)$, then $\Omega$ is finite.
\end{cor}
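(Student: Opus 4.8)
The plan is to derive this as an immediate corollary of Theorem~\ref{thmunialg}, using the fact that $C(\Omega)$ is itself a uniform algebra on the compact space $\Omega$ (the full algebra of continuous functions, which trivially contains the constants and separates points). Since $C(\Omega)$ has property $(\ddag)$, Theorem~\ref{thmunialg} applies directly and tells us that $C(\Omega)$ is finite-dimensional. It then remains only to translate finite-dimensionality of $C(\Omega)$ into finiteness of $\Omega$.

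First I would check the hypotheses of Theorem~\ref{thmunialg} in this setting. The constant function ${\bf 1}$ lies in $C(\Omega)$, so Lemma~\ref{lemsubalg} gives $\overline{\partial_e B(C(\Omega)^\ast)}^{w^*} \subset S(C(\Omega)^\ast)$, with all such functionals norm-attaining. Next, the extreme points of $B(C(\Omega)^\ast)$ are exactly the functionals of the form $\lambda \delta(\omega)$ with $|\lambda| = 1$ and $\omega \in \Omega$, and for each such point $span\{\delta(\omega)\}$ is a one-dimensional $L$-summand in $C(\Omega)^\ast = M(\Omega)$ (the splitting being into the point mass at $\omega$ and the measures vanishing on $\{\omega\}$). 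Hence $D = \partial_e B(C(\Omega)^\ast)$ itself, so $D$ is trivially $w^*$-dense in $\partial_e B(C(\Omega)^\ast)$. Since $C(\Omega)$ has $(\ddag)$ — and for a uniform algebra $(\ddag)$ and $(\dagger)$ coincide on the relevant functionals because, by Lemma~\ref{lemsubalg}, the extreme points and their $w^*$-accumulation points are automatically norm-attaining — Theorem~\ref{thmunialg} yields that $C(\Omega)$ is isometric to $\ell^\infty(k)$ for some integer $k > 0$.

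Finally I would conclude $\Omega$ is finite. If $\Omega$ were infinite, it would contain a countable sequence of distinct points, and by complete regularity (compact Hausdorff spaces are normal) one produces infinitely many linearly independent continuous functions — for instance, Urysohn functions supported near each point of a discrete subset, or simply observing that $\dim C(\Omega) \geq |\Omega|$ always. This contradicts $\dim C(\Omega) = k < \infty$. Conversely if $\Omega$ is finite with $n$ points then $C(\Omega) \cong \ell^\infty(n)$, consistent with the conclusion. I do not anticipate any real obstacle here: the only mild point to be careful about is confirming that $C(\Omega)$ genuinely satisfies the norm-attainment condition appearing in the hypotheses of Theorem~\ref{thmunialg}, but this is exactly what the last paragraph of that theorem's proof records, so the corollary is essentially a specialization with no new difficulty.
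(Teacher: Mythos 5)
Your proposal is correct and follows exactly the route the paper intends: the corollary is stated there as an immediate consequence of Theorem~\ref{thmunialg} applied to the uniform algebra $C(\Omega)$, and your verification of the hypotheses (extreme points $\lambda\delta(\omega)$ spanning one-dimensional $L$-summands in $M(\Omega)$, norm-attainment via Lemma~\ref{lemsubalg}) together with the observation that finite-dimensionality of $C(\Omega)$ forces $\Omega$ to be finite is precisely the specialization being invoked. No gaps.
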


Let $X$ be a Banach space such that $X^\ast$ is isometric to $L^1(\mu)$ for some positive measure $\mu$. We recall that for any compact or locally compact set $\Omega$, the spaces $C(\Omega)$ and $C_0(\Omega)$, $A(K)$-spaces for a simplex $K$ are in this class (see \cite{L}). 

Recall from \cite{LR} that a non-zero element $x \in X$ is said to be a \textit{k-smooth} point if the set of supporting functionals $J(x):=\{f \in S(X^*): f(x)=\|x\|\}$ is $k$-dimensional, that is, $dim(span(J(x))=k$.

\begin{thm} \label{L1predual}
Let $X$ be an $L^1$-predual space with $(\ddag)$.  For any $x \in S(X)$, $x$ is a $k$-smooth point for some $k>0$.
 \end{thm}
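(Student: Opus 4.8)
The plan is to prove directly that the set of supporting functionals $J(x)=\{f\in S(X^*):f(x)=1\}$ spans a \emph{finite}-dimensional subspace of $X^*$; since the Hahn--Banach theorem forces $J(x)\neq\emptyset$, this dimension is then some integer $k>0$, and $x$ is $k$-smooth by definition. First note that $J(x)=B(X^*)\cap\{f\in X^*:f(x)=1\}$ is a nonempty, convex, $w^*$-compact \emph{face} of $B(X^*)$: if $f=\frac12(g+h)$ with $g,h\in B(X^*)$ and $f(x)=1$, then $\mathrm{Re}\,g(x)\le1$, $\mathrm{Re}\,h(x)\le1$ and $\mathrm{Re}\,g(x)+\mathrm{Re}\,h(x)=2$ force $g(x)=h(x)=1$. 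Hence $\partial_e J(x)\subseteq\partial_e B(X^*)$, and by the Krein--Milman theorem $J(x)=\overline{\mathrm{conv}}^{w^*}\!\big(\partial_e J(x)\big)$. Since any finite-dimensional subspace $V\subseteq X^*$ is norm- and $w^*$-closed and convex, it suffices to show $\dim\,\mathrm{span}\big(\partial_e J(x)\big)<\infty$: then $J(x)\subseteq V:=\mathrm{span}(\partial_e J(x))$, so $\mathrm{span}\,J(x)=V$ is finite-dimensional, and it is nonzero because $J(x)\neq\emptyset$.

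The structural input I would use is the fact recalled just before the statement: as $X^*$ is isometric to $L^1(\mu)$, every $x^*\in\partial_e B(X^*)$ spans a one-dimensional $L$-summand $X^*=\mathrm{span}\{x^*\}\oplus_1 N$. As in the proof of Theorem~\ref{thmunialg}, two linearly independent extreme points of $B(X^*)$ cannot lie in a common one-dimensional $L$-summand: writing $y^*=\alpha x^*+n^*$ in the $\oplus_1$ decomposition determined by $x^*$ and using extremality of $y^*$ forces $\alpha=0$, that is, $y^*\in N$, whence $\|x^*-y^*\|=\|x^*\|+\|y^*\|=2$. Moreover, every $f\in J(x)$ is norm-attaining --- it attains its norm at $x$ itself.

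Suppose, towards a contradiction, that $\mathrm{span}\big(\partial_e J(x)\big)$ is infinite-dimensional. Picking $x_1^*\in\partial_e J(x)$ and, inductively, $x_{n+1}^*\in\partial_e J(x)\setminus\mathrm{span}\{x_1^*,\dots,x_n^*\}$, we obtain a linearly independent sequence $\{x_n^*\}_{n\ge1}\subseteq\partial_e J(x)$; by the previous step $\|x_n^*-x_m^*\|=2$ for $n\neq m$, and each $x_n^*$ attains its norm at $x$. The one genuine obstacle is that $X$ need not be separable, so one cannot directly extract a $w^*$-convergent \emph{subsequence}; I would bypass this by descending to a separable subspace. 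For each $n<m$ choose $w_{nm}\in S(X)$ with $|(x_n^*-x_m^*)(w_{nm})|>1$, and set $Z:=\overline{\mathrm{span}}\big(\{x\}\cup\{w_{nm}:n<m\}\big)$, a separable closed subspace of $X$ containing $x$. By Corollary~\ref{corhereddag}, $Z$ has property $(\ddag)$. The restrictions $f_n:=x_n^*|_Z$ lie in $B(Z^*)$, each attains its norm at $x\in S(Z)$ (so $\|f_n\|=1$), and $\|f_n-f_m\|\ge|(x_n^*-x_m^*)(w_{nm})|>1$ for $n\neq m$. Since $Z$ is separable, $B(Z^*)$ is $w^*$-metrizable, so some subsequence satisfies $f_{n_j}\xrightarrow{w^*}\psi\in B(Z^*)$; then $\psi(x)=\lim_j x_{n_j}^*(x)=1$, so $\|\psi\|=1$ and $\psi$ is norm-attaining. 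Applying property $(\ddag)$ for $Z$ to the $1$-separated net $\{f_{n_j}\}_j$ of norm-attaining functionals in $B(Z^*)$ with $w^*$-limit the norm-attaining functional $\psi$ (take $\e=1$) yields $\|\psi\|\le1/2$, contradicting $\|\psi\|=1$.

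Hence $\mathrm{span}(\partial_e J(x))$, and therefore $\mathrm{span}\,J(x)$, is finite-dimensional and nonzero, so $x$ is a $k$-smooth point with $k=\dim\,\mathrm{span}\,J(x)\ge1$. Aside from the separability reduction --- whose crux is the elementary observation that norm-attainment at $x$ automatically survives restriction to any subspace containing $x$ --- the proof is only Krein--Milman, the $L$-summand rigidity of $\partial_e B(X^*)$ (independent extreme points lie at distance $2$), and a single application of property $(\ddag)$; the separability step is the part I expect to require the most care.
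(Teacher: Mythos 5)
Your proof is correct, but it follows a genuinely different route from the paper's. The paper passes to the quotient: it invokes the facial structure theorem of Ellis--Roy--Rao--Uttersrud to get $X^\ast = span\{S_{x_0}\}\bigoplus_1 N$ with $span\{S_{x_0}\}$ $w^*$-closed, identifies $X/J$ isometrically with $A(S_{x_0})$ for the Choquet simplex $S_{x_0}$, transfers $(\ddag)$ to $X/J$ via proximinality of $J$, and then applies Theorem~\ref{thmunialg} to conclude $X/J \cong \ell^\infty(k)$, from which $k$-smoothness falls out. You instead argue directly on the face $J(x)$ itself: extreme points of $J(x)$ are extreme in $B(X^\ast)$, each spans a one-dimensional $L$-summand (the fact the paper recalls just before the theorem), so linearly independent ones are $2$-separated; an infinite linearly independent family would then violate $(\ddag)$, and Krein--Milman upgrades ``$span(\partial_e J(x))$ finite-dimensional'' to ``$span(J(x))$ finite-dimensional.'' Your approach is more self-contained --- it needs neither the $w^*$-closedness of $span\{S_{x_0}\}$, nor the $A(S_{x_0})$/simplex identification, nor the quotient-by-proximinal-subspace proposition --- and your separable reduction is a genuinely careful touch: by restricting to a separable subspace containing $x$ and the separating witnesses $w_{nm}$, you extract an honest $w^*$-convergent \emph{subsequence} (rather than a subnet, which could repeat terms and spoil the $\e$-separation hypothesis in the definition of $(\ddag)$), while norm-attainment at $x$ survives restriction for free. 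What the paper's route buys in exchange is the sharper structural statement that $X/J$ is isometric to $\ell^\infty(k)$, i.e., the face $S_{x_0}$ is a finite-dimensional simplex, which is more information than $k$-smoothness alone.
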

 
 \begin{proof} Let $x_0 \in S(X)$. Denote by $S_{x_0}:=\{f \in S(X^*) : f(x_0)=1\}$. Since $X$ is an $L^1$-predual space and as $S_{x_0}$ is a w$^*$-closed extreme convex subset (face) of $B(X^*)$,
 it follows from \cite[Theorem 3.3]{E}  that $span\{S_{x_0}\}$ is a w$^*$-closed subspace and $X^\ast = span\{S_{x_0}\} \bigoplus_1 N$ for a closed subspace $N \subset X^\ast$. 
 
 Now, if $J = \{x \in X: x(S_{x_0})=0\}$, then $J^\bot = span\{S_{x_0}\}$. Let $\pi: X \rightarrow X/J$ be the quotient map and  $\Phi: X/J \rightarrow A(S_{x_0})$ be defined as $\Phi(x)(x^\ast)= x^\ast(x_0)$, where $x^\ast \in S_{x_0}$ and $x \in X$. This is a linear map taking $\pi(x_0)$ to $1$ and it is easy to see that $\Phi$ is a surjective isometry. 
 
 It now follows from \cite[Theorem 2, Section 19]{L} and its complex versions from \cite[Section 20]{L}, that $S_{x_0}$ is a Choquet simplex. Since $X/J$ has $(\ddag)$, we obtain from Theorem~\ref{thmunialg} that $X/J$ is isometric to $\ell^\infty(k)$. Consequently $x$ is a $k$-smooth point.
 \end{proof}
 
  \begin{rem} It is reasonable to conjecture that if $X$ is a $L^1$-predual space and has $(\ddag)$, then it is isometric to $c_0(\Gamma)$ for a discrete set $\Gamma$.
\end{rem}

 \begin{rem}
In \cite{CPW}, the authors characterize Kaplansky's $I$-rings among $C^\ast$-algebras in terms of denseness of points of strong subdifferentiability in the unit sphere. It would be interesting	to get an analogue of this in terms of variations on Lim's conditions or fixed point property that we have considered here.  Classification schemes related to these ideas will appear elsewhere.
 \end{rem}
\smallskip
Some of these results are part of an ongoing research of the second author, funded by the Anusandhan National Research Foundation (ANRF) Core Research Grant 2024-2027, CRG-2023/ No-000595, `Classification of Banach spaces using differentiability'.
\smallskip


\begin{thebibliography}{99}
\bibitem{A1} W. B. Arveson, {\em An invitation to $C^*$-algebras}, Springer GTM 39, Berlin 1976.

\bibitem{AE} L. Asimow and A.~J. Ellis, {\em Convexity theory and its applications in functional analysis}, London Mathematical Society Monographs, 16, Academic Press, London-New York, 1980.

\bibitem{B}S. Basu and T. S. S. R. K. Rao, {\em Some stability results for asymptotic norming properties in Banach spaces}, Colloq. Math. 75 (1998) 271--284.

\bibitem{BR} B. Blackader and M. R{\o}rdam,{\em The space of tracial states on a $C^*$-algebra},  Expositiones Mathematicae (2024) 125618,
https://doi.org/10.1016/j.exmath.2024.125618. 

\bibitem{CPW} M. D. Contreras, R. Pay\'a and W. Werner, {\em $C^*$-algebras that are I-rings}, J. Math. Anal. Appl. 198 (1996) 227--236.

\bibitem{Di} J. Diestel, \emph{Sequences and series in Banach spaces}, Graduate Texts in Mathematics, 92, Springer, New York, 1984.

\bibitem{E} A. J. Ellis, A. K. Roy, T. S. S. R. K. Rao and U. Uttersrud, {\em Facial structure of complex Lindenstrauss spaces}, Trans. Amer. math. Soc., 268 (1981) 173--186.
    
\bibitem{Ga} T. Gamelin, {\em Uniform algebras}, Prentice-Hall, Englewood Cliffs, 1969.
    
\bibitem{HWW} P. Harmand, D. Werner and W. Werner, {\em $M$-ideals in Banach spaces and Banach algebras}, Springer LNM 1547, Berlin 1993.

\bibitem{Ho} R. B. Holmes, {\em Geometric Functional analysis and its applications}, Springer. GTM 24, Berlin 1975.

\bibitem{IL} V. Indumathi and S. Lalithambigai, \emph{A new proof of proximinality for $M$-ideals}, Proc. Amer. Math. Soc. {\bf 135} (2007), no.~4, 1159--1162.

\bibitem{L} H. E.  Lacey, {\em  The isometric theory of classical Banach spaces}, In: DieGrundlehren der Mathematischen Wissenschaften, Band, vol. 208, pp. x+270. Springer, New York (1974).

\bibitem{Lim} T. C. Lim, {\em Asymptotic centers and nonexpansive mappings in conjugate spaces}, Pacific J. Math., 90 (1980) 135--143.

\bibitem{LM} A. T. M. Lau and P. F. Mah, {\em Quasi-normal structure for certain spaces of operators on Hilbert spaces}. Pacific J. Math. 121 (1986) 109--118.

\bibitem{LR} B. L. Lin and T. S. S. R. K. Rao, {\em Multi smoothness in Banach spaces}, Int. J. Math. Math. sci. (2007) 52382  Art.ID , 12pp.

\bibitem{NP} I. Namioka and R. R. Phelps, {\em Banach spaces which are Asplund spaces}, Duke Math. J., 42 (1975) 735--750.

\bibitem{NR} D. Narayana and T.~S.~S.~R.~K. Rao, {\em Transitivity of proximinality and norm-attaining functionals}, Colloq. Math., 104 (2006),
 1--19.

\bibitem{Rao} T.S.S.R.K. Rao, \emph{Uniform algebras as Banach spaces} Complex Anal. Oper. Theory, (2026), article 48. 
 
 \bibitem{Sin} I. Singer, {\em Best approximations in normed linear spaces by elements of linear subspaces}, Springer, Berlin, 1970.
\end{thebibliography}
\end{document}